\newtheorem{proposition}{Proposition}[section]
\newtheorem{lemma}[proposition]{Lemma}
\newtheorem{theorem}[proposition]{Theorem}
\newtheorem{corollary}[proposition]{Corollary}
\newtheorem{conjecture}{Conjecture}[section]
\theoremstyle{definition}
\newtheorem{remark}[proposition]{Remark}
\newtheorem{definition}[proposition]{Definition}
\newcommand{\R}{\mathbb{R}}
\newcommand{\C}{\mathbb{C}}
\newcommand{\A}{\mathbb{A}}
\newcommand{\Z}{\mathbb{Z}}
\newcommand{\Q}{\mathbb{Q}}
\newcommand{\pr}{\mathbb{P}}
\newcommand{\scK}{\mathcal{K}}
\newcommand{\scL}{\mathcal{L}}
\newcommand{\scI}{\mathcal{I}}
\newcommand{\scD}{\mathcal{D}}
\newcommand{\scX}{\mathcal{X}}
\newcommand{\scO}{\mathcal{O}}
\newcommand{\scR}{\mathcal{R}}
\newcommand{\scB}{\mathcal{B}}
\newcommand{\scY}{\mathcal{Y}}
\newcommand{\scP}{\mathcal{P}}
\DeclareMathOperator{\lct}{lct}
\DeclareMathOperator{\Aut}{Aut}
\DeclareMathOperator{\Supp}{Supp}
\DeclareMathOperator{\DF}{DF}
\DeclareMathOperator{\mult}{mult}
\DeclareMathOperator{\val}{val}
\title[Alpha invariants and K-stability]{Alpha invariants and K-stability for general polarisations of Fano varieties}
\author{Ruadha\'i Dervan}
\begin{document}

\maketitle

\begin{abstract}
We provide a sufficient condition for polarisations of Fano varieties to be K-stable in terms of Tian's alpha invariant, which uses the log canonical threshold to measure singularities of divisors in the linear system associated to the polarisation. This generalises a result of Odaka-Sano in the anti-canonically polarised case, which is the algebraic counterpart of Tian's analytic criterion implying the existence of a K\"{a}hler-Einstein metric. As an application, we give new K-stable polarisations of a general degree one del Pezzo surface. We also prove a corresponding result for log K-stability.
\end{abstract}

\tableofcontents

\section{Introduction}

A central problem in complex geometry is to find necessary and sufficient conditions for the existence of a constant scalar curvature K\"{a}hler (cscK) metric in a given K\"{a}hler class. One of the first sufficient conditions is due to Tian, who introduced the alpha invariant. The alpha invariant $\alpha(X,L)$ of a polarised variety $(X,L)$ is defined as the infimum of the log canonical thresholds of $\Q$-divisors in the linear system associated to $L$, measuring singularities of these divisors. Tian \cite{GT} proved that if $X$ is a Fano variety of dimension $n$ with canonical divisor $K_X$, the lower bound $\alpha(X,-K_X)>\frac{n}{n+1}$ implies that $X$ admits a K\"{a}hler-Einstein metric in $c_1(X)=c_1(-K_X)$. 

The Yau-Tian-Donaldson conjecture states that the existence of a cscK metric in $c_1(L)$ for a polarised manifold $(X,L)$ is equivalent to the algebro-geometric notion of K-stability, related to geometric invariant theory. This conjecture has recently been proven in the case that $L=-K_X$ \cite{CDS2, CDS3,CDS4,GT2}. By work of Donaldson \cite{SD} and Stoppa \cite{JS1}, it is known that the existence of a cscK metric in $c_1(L)$ implies that $(X,L)$ is K-stable, provided the automorphism group of $X$ is discrete. Odaka-Sano \cite{OS} have given a direct algebraic proof that $\alpha(X,-K_X)>\frac{n}{n+1}$ implies that $(X,-K_X)$ is K-stable. This provides the first algebraic proof of K-stability of varieties of dimension greater than one. 

On the other hand, few sufficient criteria are known for K-stability in the general case. We give a sufficient condition for general polarisations of Fano varieties to be K-stable. A fundamental quantity will be the slope of a polarised variety $(X,L)$, defined as \begin{equation}\mu(X,L)=\frac{-K_X.L^{n-1}}{L^n}=\frac{\int_Xc_1(X).c_1(L)^{n-1}}{\int_Xc_1(L)^n}.\end{equation} The slope is therefore a topological quantity which, after rescaling $L$, can be assumed equal to $1$. Our main result is then as follows. 

\begin{theorem}\label{intromaintheorem} Let $(X,L)$ be a polarised $\Q$-Gorenstein log canonical variety with canonical divisor $K_X$. Suppose that 
\begin{itemize}
\item[(i)]  $\alpha(X,L)>\frac{n}{n+1}\mu(X,L)$ and
\item[(ii)]  $-K_X \geq \frac{n}{n+1}\mu(X,L) L$.
\end{itemize}
Then $(X,L)$ is K-stable. \end{theorem}

Here, for divisors $H,H'$, we write $H\geq H'$ to mean $H-H'$ is nef. Note that when $L=-K_X$, the slope of $(X,L)$ is equal to $1$, the second condition is vacuous and this theorem is then due to Odaka-Sano. The condition that $X$ is log canonical ensures that $\alpha(X,L)\geq 0$, while the condition that $X$ is $\Q$-Gorenstein ensures that $-K_X$ exists as a $\Q$-Cartier divisor. The second condition implies that $X$ is either Fano or numerically Calabi-Yau, see Remark \ref{calabiyau}. By proving a continuity result for the alpha invariant, we also show in Corollary \ref{open} that provided the inequality in the second condition is strict, the conditions to apply Theorem $\ref{intromaintheorem}$ are \emph{open} when varying the polarisation. 

Theorem \ref{intromaintheorem} gives the first non-toric criterion for K-stability of general polarisations of Fano varieties. On the analytic side, a result of LeBrun-Simanca \cite{LS} states that the condition that a polarised variety $(X,L)$ admits a cscK metric is an \emph{open} condition when varying $L$, provided the automorphism group of $X$ is discrete. As the existence of a cscK metric in $c_1(L)$ implies K-stability, this gives an analytic proof that in the situation of \ref{intromaintheorem}, K-stability is an open condition again in the case that the automorphism group of $X$ is discrete. On the other hand, our result can also be used to give \emph{explicit} K-stable polarisations, see for example Theorem \ref{introex}. 

Many computations \cite{CDS, IC} of alpha invariants have been done for anti-canonically polarised Fano varieties. Cheltsov \cite{IC}, building on work of Park \cite{JP}, has calculated alpha invariants of del Pezzo surfaces. As a corollary, Cheltsov's results imply that general anti-canonically polarised del Pezzo surfaces of degrees one, two and three are K-stable. Following the method of proof of Cheltsov, we give new examples of K-stable polarisations of a general del Pezzo surface $X$ of degree one. Noting that $X$ is isomorphic to a blow-up of $\pr^2$ at $8$ points in general position, we denote by $H$ the hyperplane divisor, $E_i$ the $8$ exceptional divisors and $L_{\lambda} = 3H - \sum_{i=1}^7E_i - \lambda E_8$ arising from this isomorphism.

\begin{theorem}\label{introex} $(X,L_{\lambda})$ is K-stable for \begin{equation}\frac{19}{25}\approx \frac{1}{9}(10-\sqrt{10}) < \lambda < \sqrt{10}-2\approx \frac{29}{25}.\end{equation} \end{theorem}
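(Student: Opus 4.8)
The plan is to verify the two hypotheses of Theorem \ref{intromaintheorem} for the polarisation $(X,L_\lambda)$ on a general degree one del Pezzo surface, so that K-stability follows immediately. Since $X$ is Fano, its slope can be computed directly, and the key quantities are governed by $n=2$, so the relevant threshold constant is $\frac{n}{n+1}=\frac{2}{3}$. First I would compute the intersection-theoretic data: using $H^2=1$, $E_i^2=-1$, $H\cdot E_i=0$, $E_i\cdot E_j=0$ for $i\neq j$, I would calculate $L_\lambda^2$, the slope $\mu(X,L_\lambda)=\frac{-K_X\cdot L_\lambda}{L_\lambda^2}$ (recalling $-K_X=3H-\sum_{i=1}^8 E_i$), and the quantity $\frac{2}{3}\mu(X,L_\lambda)$. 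These are explicit rational functions of $\lambda$, so this step is routine.

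Next I would verify condition (ii), namely that $-K_X-\frac{2}{3}\mu(X,L_\lambda)L_\lambda$ is nef. On a del Pezzo surface nefness can be checked against the finitely many $(-1)$-curves together with the generators of the relevant extremal rays, so I would express this $\Q$-divisor in the $H,E_i$ basis and test nonnegativity of its intersection with each such curve; this should carve out an interval in $\lambda$. The genuinely hard part is condition (i): bounding the alpha invariant $\alpha(X,L_\lambda)$ from below by $\frac{2}{3}\mu(X,L_\lambda)$. Following Cheltsov's method, the plan is to show that for every effective $\Q$-divisor $D\sim_\Q L_\lambda$ the pair $(X,cD)$ is log canonical for all $c$ up to the desired threshold, i.e.\ $\operatorname{lct}(X,D)\geq \frac{2}{3}\mu(X,L_\lambda)$. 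This requires a case analysis on the possible singular behaviour of $D$—controlling the multiplicity of $D$ at points and along the various $(-1)$-curves and the anticanonical pencil—and is where the geometry specific to a general degree one del Pezzo surface enters.

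For the alpha invariant estimate I would stratify according to where a log canonical singularity of the pair could occur. The worst-case contributions come from $D$ being supported on, or highly tangent to, the distinguished curves: the $(-1)$-curves $E_i$, their images, and members of the elliptic pencil $|-K_X|$. For each stratum I would bound the multiplicity $\operatorname{mult}_p D$ or the coefficient of a curve in $D$ using the intersection numbers $D\cdot C$ for suitable curves $C$, invoking the standard inequalities relating log canonical thresholds to multiplicities (e.g.\ $\operatorname{lct}(X,D)\geq 1/\operatorname{mult}_p D$ in the smooth surface case, refined when $D$ meets a curve with controlled tangency). The interplay between these local bounds and the global numerical constraint $D\equiv L_\lambda$ is what produces the explicit endpoints $\frac{1}{9}(10-\sqrt{10})$ and $\sqrt{10}-2$; I expect the irrational bounds to emerge from optimising a quadratic arising in one of these tangency estimates.

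I expect the main obstacle to be the alpha invariant lower bound, specifically handling the configurations where $D$ acquires a non-log-canonical singularity at a point lying on several of the special curves simultaneously, since there the naive multiplicity bound is too weak and one must use a more delicate local analysis (passing to a log resolution or using the theory of log canonical thresholds for pairs with prescribed tangency). Once both hypotheses are confirmed on the stated $\lambda$-interval, Theorem \ref{intromaintheorem} applies verbatim and yields K-stability of $(X,L_\lambda)$, completing the proof.
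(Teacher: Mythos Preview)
Your overall strategy is exactly that of the paper: verify the two hypotheses of Theorem \ref{intromaintheorem} and conclude. You correctly identify the computation of the slope, the test of condition (ii) against the $(-1)$-curves, and Cheltsov's method for the alpha invariant lower bound.

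However, you have the roles of the two conditions reversed. The irrational endpoints $\frac{1}{9}(10-\sqrt{10})$ and $\sqrt{10}-2$ both arise from condition (ii), the nefness of $-K_X - \frac{2}{3}\mu(X,L_\lambda)L_\lambda$. After rescaling, this divisor takes the form $3H-\sum_{i=1}^7 E_i - \delta(\lambda) E_8$ with $\delta(\lambda)=\frac{6-4\lambda-\lambda^2}{2+2\lambda-3\lambda^2}$, and testing against Manin's list of exceptional classes on a degree-one del Pezzo the binding constraint is $0\leq \delta(\lambda)\leq \frac{4}{3}$; the two endpoints come from the two ends of this inequality, and each is a root of a quadratic in $\lambda$. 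So the ``quadratic optimisation'' you anticipate happens in the nefness check, not in the alpha invariant estimate.

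By contrast the alpha invariant bound, which you expect to be the main obstacle requiring a stratified case analysis, is in the paper the easier step. One proves $\alpha(X,L_\lambda)\geq\min\{\frac{1}{2-\lambda},1\}$ by a single short argument: given an effective $D\sim_\Q L_\lambda$ with $(X,\omega D)$ not log canonical at $p$, write $D=aC+\Omega$ with $C\in|{-K_X}|$ through $p$ and $C\not\subset\Supp(\Omega)$; genericity of $X$ forces $(X,\omega aC)$ to be log canonical, so a convexity lemma makes $(X,\frac{\omega}{1-\omega a}\Omega)$ not log canonical at $p$, hence $\omega\, C\cdot\Omega\geq\omega\,\mult_p\Omega>1-\omega a$, and therefore $\omega(2-\lambda)=\omega D\cdot C>1$. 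This bound exceeds $\frac{2}{3}\mu(X,L_\lambda)$ throughout the interval already imposed by (ii), so condition (i) adds no further restriction. No stratification by special curves and no delicate tangency analysis is needed.
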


We note that Theorem \ref{intromaintheorem} is merely \emph{sufficient} to prove K-stability. It would be interesting to know exactly which polarisations of a general degree one del Pezzo surface are K-stable. Analytically, a result of Arezzo-Pacard \cite{AP} implies that $(X,L_{\lambda})$ admits a cscK metric for $\lambda$ sufficiently small. In particular, by work of Donaldson \cite{SD} and Stoppa \cite{JS1}, this implies $(X,L_{\lambda})$ is K-stable for $\lambda$ sufficiently small. However, using the technique of slope stability, Ross-Thomas \cite[Example 5.30]{RT} have shown that there are polarisations of such an $X$ which are K-\emph{un}stable. 

The recent proof of the Yau-Tian-Donaldson conjecture \cite{CDS2, CDS3,CDS4, GT2} in the case $L=-K_X$ has emphasised the importance of log K-stability. This concept extends K-stability to pairs $(X,D)$ and conjecturally corresponds to cscK metrics with cone singularities along $D$. With this in mind, we extend Theorem \ref{intromaintheorem} to the log setting as follows.

\begin{theorem}\label{intrologmaintheorem} Let $((X,D);L)$ consist of a $\Q$-Gorenstein log canonical pair $(X,D)$ with canonical divisor $K_X$, such that $D$ is an effective integral reduced Cartier divisor on a polarised variety $(X,L)$. Denote $\mu_{\beta}((X,D);L)= \frac{-(K_X+(1-\beta)D).L^{n-1}}{L^n}$. Suppose that 
\begin{itemize}
\item[(i)]  $\alpha((X,D);L)>\frac{n}{n+1}\mu_{\beta}((X,D);L)$ and
\item[(ii)]  $-(K_X +(1-\beta)D)\geq \frac{n}{n+1}\mu_{\beta}((X,D);L) L$.
\end{itemize}
Then $((X,D);L)$ is log K-stable with cone angle $\beta$ along $D$.\end{theorem}

\noindent {\bf Notation and conventions:} By a polarised variety $(X,L)$ we mean a normal complex projective variety $X$ together with an ample line bundle $L$. We often use the same letter to denote a divisor and the associated line bundle, and mix multiplicative and additive notation for line bundles.

\section{Prerequisites}
\subsection{K-stability}

K-stability of a polarised variety $(X,L)$ is an algebraic notion conjecturally equivalent to the existence of a constant scalar curvature K\"{a}hler metric in $c_1(L)$, which requires the so-called Donaldson-Futaki invariant to be positive for all non-trivial test configurations.

\begin{definition} A \emph{test configuration} for a normal polarised variety $(X,L)$ is a normal polarised variety $(\scX,\scL)$ together with
\begin{itemize} 
\item a proper flat morphism $\pi: \scX \to \C$,
\item a $\C^*$-action on $\scX$ covering the natural action on $\C$,
\item and an equivariant very ample line bundle $\scL$ on $\scX$
\end{itemize}
such that the fibre $(\scX_t,\scL_t)$ over $t$ is isomorphic to $(X,L)$ for one, and hence all, $t \in \C^*$. 
\end{definition}

\begin{definition}\label{triviality} We say that a test configuration is \emph{almost trivial} if $X$ is $\C^*$-isomorphic to the product configuration away from a closed subscheme of codimension at least 2. \end{definition} 

\begin{definition} We will later be interested in a slightly modified version of test configurations. In particular, we will be interested in the case where we have a proper flat morphism $\pi: \scX \to \pr^1$ with target $\pr^1$ rather than $\C$ such that $\scL$ is just \emph{relatively} semi-ample over $\pr^1$, that is, a multiple of the restriction to each fibre over $\pr^1$ is basepoint free. We call this a \emph{semi-test configuration}. \end{definition}

As the $\C^*$-action on $(\scX,\scL)$ fixes  the central fibre $(\scX_0,\scL_0)$, there is an induced action on $H^0(\scX_0,\scL^k_0)$ for all $k$. Denote by $w(k)$ the total weight of this action, which is a polynomial in $k$ of degree $n+1$ for $k\gg 0$, where $n$ is the dimension of $X$. Denote the Hilbert polynomial of $(X,L)$ as \begin{equation}\scP(k) = \chi(X,L^k)=a_0k^n+a_1k^{n-1}+O(k^{n-2})\end{equation}and denote also the total weight of the $\C^*$-action on $H^0(\scX_0,\scL^k_0)$ as \begin{equation}w(k) = b_0k^{n+1}+b_1k^n+O(k^{n-1}).\end{equation}

\begin{definition}We define the \emph{Donaldson-Futaki invariant} of a test configuration $(\scX,\scL)$ to be \begin{equation}\DF(\scX,\scL)=\frac{b_0a_1-b_1a_0}{a_0^2}.\end{equation} We say $(X,L)$ is \emph{K-stable} if $\DF(\scX,\scL)>0$ for all test configurations which are not almost trivial. \end{definition}

\begin{remark} For more information on the following remarks, or for a more detailed discussion of K-stability, see \cite{RT}. 
\begin{itemize}
\item The definition of K-stability is independent of scaling $L\to L^r$. In particular, it makes sense for pairs $(X,L)$ where $X$ is a variety and $L$ is a $\Q$-line bundle.
\item If one expands $\frac{w(k)}{k\scP(k)} = f_0 + f_1k^{-1} + O(k^{-2})$, the Donaldson-Futaki invariant is given by $f_1$.
\item One should think of test configuration as geometrisations of the one-parameter subgroups that are considered when applying the Hilbert-Mumford criterion to GIT stability. In fact, asymptotic Hilbert stability implies K-semistability, since the Donaldson-Futaki invariant appears as the leading coefficient in a polynomial associated with asymptotic Hilbert stability.
\item The notion of almost trivial test configurations was introduced by Stoppa \cite{JS3} to resolve a pathology noted by Li-Xu \cite[Section 2.2]{LX}. 
\end{itemize}\end{remark}

\begin{conjecture}(Yau-Tian-Donaldson) A smooth polarised variety $(X,L)$ admits a constant scalar curvature metric in $c_1(L)$ if and only if $(X,L)$ is K-stable. \end{conjecture}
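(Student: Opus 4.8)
The statement is the Yau--Tian--Donaldson conjecture, which is famously open for general polarisations; honestly, I can only propose a strategy, indicate what is rigorously known, and isolate the obstruction. The natural plan is to treat the two implications separately, since they are of entirely different difficulty. One direction is established in wide generality, while the converse is known only in the anti-canonical case $L=-K_X$, and it is precisely this asymmetry that explains why the present paper settles for the sufficient algebraic criteria of Theorems \ref{intromaintheorem} and \ref{intrologmaintheorem} rather than a full equivalence.

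For the direction ``cscK $\Rightarrow$ K-stable'', the plan rests on relating $\DF$ to the analytic variational picture. The cleanest rigorous input is Donaldson's lower bound for the Calabi functional: for every test configuration $(\scX,\scL)$, the $L^2$-norm of $S(\omega)-\hat S$ over Kähler metrics $\omega\in c_1(L)$ is bounded below by a positive multiple of $-\DF(\scX,\scL)$. A cscK metric makes the Calabi functional vanish, forcing $\DF(\scX,\scL)\geq 0$ and hence K-semistability. To promote this to strict K-stability when $\Aut(X)$ is discrete one invokes Stoppa's argument: were there a nontrivial test configuration with $\DF=0$, one blows up an appropriately chosen point, applies the Arezzo--Pacard--Singer gluing theorem to produce a cscK metric on the blow-up, and derives a contradiction with K-semistability of the blow-up. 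Both ingredients are rigorous, so this direction is genuinely settled (modulo the subtlety of almost trivial configurations, which the definition in the excerpt is designed to handle).

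For the reverse direction ``K-stable $\Rightarrow$ cscK'', the only known route is a continuity method modelled on the Fano case. One fixes a continuity path --- for instance a family of twisted cscK-type equations, or a deformation interpolating from a class where a solution is known --- starting from a parameter at which a solution exists, and seeks to show that the set of parameters admitting a solution is both open and closed. Openness is governed by the linearisation: invertibility of the Lichnerowicz operator, handled by the implicit function theorem once the obstruction from holomorphic vector fields is accounted for. Closedness requires a priori estimates, and here the crucial input is a \emph{partial $C^0$ estimate} giving uniform control of the Bergman kernel along the path. Granted this, one passes to a Gromov--Hausdorff limit, identifies the limit with a polarised projective variety, and argues that failure of the path to reach the final parameter would manufacture a destabilising test configuration with $\DF\leq 0$, contradicting K-stability.

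The main obstacle is precisely this last step. In the anti-canonical case $L=-K_X$ the continuity path carries a uniform Ricci lower bound, so Cheeger--Colding--Tian theory furnishes the compactness and the partial $C^0$ estimate, and one proves the Gromov--Hausdorff limit is a $\Q$-Fano variety; this is the content of the Chen--Donaldson--Sun and Tian proofs. For a general polarisation a cscK metric controls only the scalar curvature, which is far too weak to govern the geometry of limits: there is no analogue of Cheeger--Colding regularity under scalar curvature bounds alone. Consequently the closedness estimates, the structure theory of limit spaces, and the identification of limits with algebraic objects are all unavailable, and this is exactly why the equivalence remains open outside the Fano setting. A realistic attempt would therefore either develop new compactness theory for metrics of controlled scalar curvature, or bypass geometric limits entirely via the variational/energy-properness approach, relating coercivity of the Mabuchi K-energy directly to a valuative notion of (uniform) K-stability.
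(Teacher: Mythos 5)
This statement is stated in the paper as a \emph{conjecture}, not a theorem: the paper offers no proof of it, and its accompanying remark says only that the conjecture has been proven for $L=-K_X$ by Chen--Donaldson--Sun \cite{CDS2,CDS3,CDS4} and Tian \cite{GT2}, and is expected to hold in general possibly after modifying the definition of K-stability \cite{GS}. Your assessment is therefore exactly right, and you were correct not to manufacture a ``proof'' of an open problem: the direction cscK $\Rightarrow$ K-stability (for discrete automorphisms) via Donaldson's lower bound on the Calabi functional \cite{SD} and Stoppa's blow-up argument \cite{JS1} is precisely what the paper itself cites, the converse is known only in the anti-canonical case by the continuity/Gromov--Hausdorff methods you describe, and the failure of Cheeger--Colding-type compactness under mere scalar curvature bounds is indeed the recognised obstruction in the general polarised setting --- which is why the paper proves only the sufficient criterion of Theorem \ref{intromaintheorem} rather than any part of the conjecture. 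Your closing suggestion that a variational approach through coercivity of the Mabuchi energy and uniform K-stability may be needed is also consistent with the modification anticipated in the paper's remark.
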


\begin{remark} This conjecture as stated has recently been proven by Chen-Donaldson-Sun \cite{CDS2, CDS3, CDS4} and separately Tian \cite{GT2} in the case $L=-K_X$ (so $X$ is \emph{Fano}). It is expected to hold in the general case, with possibly some slight modifications to the definition of K-stability, see \cite{GS}.\end{remark}

\subsection{Odaka's Blowing-up Formalism}
In \cite{O1}, Odaka shows that to check K-stability, it suffices to check the positivity of the Donaldson-Futaki invariant on \emph{semi}-test configurations arising from flag ideals.

\begin{definition} A flag ideal on $X$ is a coherent ideal sheaf $\scI$ on $X\times \A^1$ of the form $\scI = I_0 + (t)I_1 +\hdots +(t^N)$ with $I_0\subseteq I_1 \subseteq \hdots \subseteq I_{N-1}\subseteq \scO_X$ a sequence of coherent ideal sheaves. The ideal sheaves $I_j$ thus correspond to subschemes $Z_0\supseteq Z_1\supseteq\hdots\supseteq Z_{N-1}$ of $X$. Flag ideals can be equivalently characterised by being $\C^*$-invariant with support on $X\times\{0\}$. \end{definition}

\begin{remark}\label{flagconventions} The flag ideal $\scI$ naturally induces a coherent ideal sheaf on $X\times\pr^1$, which we also denote by $\scI$. Blowing-up $\scI$ on $X\times\pr^1$, we get a map \begin{equation} \pi: \scB=Bl_{\scI}(X\times\pr^1)\to X\times\pr^1.\end{equation} Denote by $E$ the exceptional divisor of the blow-up $\pi: \scB\to X\times\pr^1$, that is, $\scO(-E)=\pi^{-1}\scI$. Abusing notation, write $\scL-E$ to denote $(p_1\circ \pi)^*L\otimes \scO(-E)$, where $p_1:X\times\pr^1\to X$ is the natural projection. Note that the induced map from $\scB\to \pr^1$ is flat by \cite[Remark 5.2]{RT}. There is a natural $\C^*$ action on $X\times\pr^1$, acting trivially on $X$, which lifts to an action on $\scB$. With this action, provided $\scL-E$ is relatively semi-ample over $\pr^1$ and $\scB$ is normal, we have that $(\scB,\scL-E)$ is a \emph{semi}-test configuration.\end{remark}

\begin{theorem}\cite[Corollary 3.11]{O1}  Assume that $(X,L)$ is a normal polarised variety. Then $(X,L)$ is K-stable if and only if $\DF(\scB,\scL^r-E)>0$ for all $r>0$ and for all flag ideals $\scI\neq(t^N)$ with $\scB$ normal and Gorenstein in codimension one and with $\scL^r-E$ relatively semi-ample over $\pr^1$. \end{theorem}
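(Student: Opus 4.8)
The plan is to prove both implications by moving between arbitrary (normal) test configurations and the semi-test configurations cut out by flag ideals, keeping the Donaldson--Futaki invariant under control by means of its intersection-theoretic expression on the total space compactified over $\pr^1$. The essential tool, which I would invoke from the underlying formalism of \cite{O1}, is that for a normal test configuration $(\scX,\scL)$, compactified to a flat family over $\pr^1$ with relative polarisation $\bar{\scL}$, the invariant $\DF(\scX,\scL)$ depends on $(\scX,\scL)$ only through the two intersection numbers $\bar{\scL}^{n+1}$ and $K_{\scX/\pr^1}\cdot\bar{\scL}^{\,n}$, entering in a fixed combination whose coefficients are governed by the slope $\mu(X,L)$ and the leading Hilbert coefficients $a_0,a_1$. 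This is exactly why the hypotheses that $\scB$ be normal and Gorenstein in codimension one appear: they guarantee that $K_{\scB/\pr^1}$ is a well-defined Weil-divisor class, Cartier in codimension one, so that the number $K_{\scB/\pr^1}\cdot(\scL^r-E)^n$ is meaningful. I would begin by fixing this formula and then analyse how its two inputs transform under birational modifications of the total space.

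For the forward direction, that K-stability forces positivity on flag ideals, I would start from an admissible flag-ideal semi-test configuration $(\scB,\scL^r-E)$ with $\scI\neq(t^N)$ and $\scL^r-E$ relatively semi-ample over $\pr^1$, and pass to its relative ample model $(\scX',\scL')=\mathrm{Proj}\bigoplus_{m\ge 0}q_*(\scL^r-E)^{\otimes m}$, where $q:\scB\to\pr^1$ is the structure morphism. Relative semi-ampleness ensures that $\scX'$ is a normal polarised variety with an ample polarisation $\scL'$ and a contraction $\scB\to\scX'$ pulling $\scL'$ back to $\scL^r-E$. Since $\DF$ is computed from the spaces of sections and their $\C^*$-weights, both of which are preserved by this base-point-free contraction, I get $\DF(\scB,\scL^r-E)=\DF(\scX',\scL')$; and $\scI\neq(t^N)$ forces $(\scX',\scL')$ to be non-almost-trivial. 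K-stability then yields $\DF(\scX',\scL')>0$, hence $\DF(\scB,\scL^r-E)>0$.

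For the converse, I would take an arbitrary non-almost-trivial test configuration $(\scX,\scL)$, already normal by definition, and compactify it over $\pr^1$ by gluing the trivial configuration at infinity. The $\C^*$-equivariant rational map $\scX\dashrightarrow X\times\pr^1$, an isomorphism over $\C^*$, is resolved by the normalisation $\scB$ of the closure of its graph; because the modification is equivariant and supported over $X\times\{0\}$, this $\scB$ is the normalised blow-up of a flag ideal $\scI\neq(t^N)$, fitting into $\scX\xleftarrow{\rho}\scB\xrightarrow{\pi}X\times\pr^1$. For a suitable $r>0$ I would arrange $\rho^*\scL^r=\scL^r-E$, where $E$ is the exceptional divisor of $\pi$; as the pullback of an ample bundle this is nef and relatively semi-ample over $\pr^1$, so $(\scB,\scL^r-E)$ is an admissible flag-ideal semi-test configuration to which the hypothesis applies. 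It then remains to compare $\DF(\scB,\scL^r-E)$ with $\DF(\scX,\scL^r)$, the latter having the same sign as $\DF(\scX,\scL)$ because K-stability is scaling-invariant.

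I expect the crux to be precisely this last comparison. Using $\rho^*\bar{\scL}=\bar{\scL}_{\scB}-E$, the projection formula gives $\bar{\scL}_{\scB}^{\,n+1}=\bar{\scL}^{\,n+1}$ and $K_{\scB/\pr^1}\cdot(\scL^r-E)^n=K_{\scX/\pr^1}\cdot\bar{\scL}^{\,n}+K_{\scB/\scX}\cdot(\rho^*\bar{\scL})^n$, so by the intersection formula $\DF(\scB,\scL^r-E)$ and $\DF(\scX,\scL^r)$ differ only by a term proportional to the relative-discrepancy pairing $K_{\scB/\scX}\cdot(\rho^*\bar{\scL})^n$. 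The whole argument hinges on showing that this term enters with the sign making $\DF$ non-increasing under the modification, i.e. $\DF(\scB,\scL^r-E)\le\DF(\scX,\scL^r)$, for then positivity on flag ideals propagates to all test configurations. Since $\rho^*\scL$ is numerically $\rho$-trivial, $(\rho^*\bar{\scL})^n$ is a nef, $\rho$-movable curve class, and I would control the sign of its pairing with $K_{\scB/\scX}$ via the negativity lemma for proper birational morphisms of normal varieties. Pinning down this sign is the technically delicate heart of the formalism and the point where the normality and Gorenstein-in-codimension-one hypotheses are indispensable; by contrast, the remaining ingredients --- flatness over $\pr^1$, equivariance of the resolution, and invariance of $\DF$ under the relative ample model --- are comparatively routine.
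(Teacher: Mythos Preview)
The paper does not give a self-contained proof of this statement; it cites \cite[Corollary 3.11]{O1} and supplies only a one-sentence sketch in the Remark immediately following: a general test configuration is $\C^*$-birational to $X\times\A^1$, hence dominated by a blow-up along a flag ideal, and Odaka shows one can choose the flag ideal so that the Donaldson--Futaki invariants of the two families are \emph{equal}. Your overall architecture --- resolve the $\C^*$-birational map by the normalised graph closure, recognise this as a flag-ideal blow-up, and compare invariants --- matches that sketch, and your forward direction via the relative ample model is fine.

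Where you diverge, and where there is a genuine issue, is the comparison step in the converse. You set up $\DF(\scB,\scL^r-E)-\DF(\scX,\scL^r)$ as a multiple of $K_{\scB/\scX}\cdot(\rho^*\bar\scL)^n$ and propose to control its sign via the negativity lemma. Two problems. First, the negativity lemma governs the coefficients of a $\rho$-nef $\rho$-exceptional divisor, and $K_{\scB/\scX}$ carries no such nefness in general, so the lemma does not apply as you state it. Second, and more to the point, the term is simply \emph{zero}: $K_{\scB/\scX}$ is $\rho$-exceptional (it is supported where $\rho$ fails to be an isomorphism), so $\rho_*K_{\scB/\scX}=0$ as a Weil divisor, and the projection formula gives
\[
K_{\scB/\scX}\cdot(\rho^*\bar\scL)^n \;=\; \rho_*\bigl(K_{\scB/\scX}\cdot(\rho^*\bar\scL)^n\bigr) \;=\; (\rho_*K_{\scB/\scX})\cdot\bar\scL^n \;=\; 0.
\]
Thus you get equality, not merely an inequality, and the negativity lemma is the wrong tool.

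There is a further wrinkle: your decomposition presupposes that $K_\scX$ is $\Q$-Cartier so that $\rho^*K_\scX$ and hence $K_{\scB/\scX}$ make sense, but an arbitrary normal test configuration need not be $\Q$-Gorenstein. The clean way around this --- and the way Odaka actually argues --- is to bypass intersection theory for the comparison and use the weight-polynomial definition of $\DF$ directly. Since $\scX$ is normal and $\rho$ is proper birational, $\rho_*\scO_\scB=\scO_\scX$, whence $p_*\scL^k=p_*\rho_*\rho^*\scL^k=q_*(\scL^r-E)^k$ as $\C^*$-equivariant sheaves on the base for all $k$. The weight and Hilbert polynomials therefore coincide, and $\DF(\scB,\scL^r-E)=\DF(\scX,\scL^r)$ on the nose. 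This is what the paper's Remark is pointing to; once you see it, the ``delicate heart'' you anticipated dissolves.
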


\begin{remark} That $\scB$ can be assumed normal was noted by Odaka-Sano \cite[Proposition 2.1]{OS}. The condition that $\scI\neq(t^N)$ is to ensure $\scB$ is not almost trivial, see Definition \ref{triviality}.\end{remark}

\begin{remark} As a general test configuration $(\scX,\scL)$ is $\C^*$-isomorphic to $(X\times\A^1,L^r)$ away from the central fibre, it is $\C^*$-birational to $(X\times\A^1,L)$. In particular, it is dominated by a blow-up of $X\times\A^1$ along a flag ideal. Odaka shows that one can choose a flag ideal such that the Donaldson-Futaki invariant of the two test configurations are equal. In order to use the machinery of intersection theory, one must also compactify $X\times\A^1$ to $X\times\pr^1$. \end{remark}

\begin{remark} In the case the flag is of the form $\scI = I_0 + (t)$, blowing-up $\scI$ on $X\times \A^1$ leads to \emph{deformation to the normal cone}. In \cite{RT2}, Ross-Thomas study test configurations arising from this process. Stability with respect to test configurations arising from blow-ups of the form $\scI = I_0 + (t)$ is called \emph{slope stability}. Note that Panov-Ross \cite[Example 7.8]{PR} have shown that the blow-up of $\pr^2$ at $2$ points is slope stable but is not K-stable. One must therefore consider more general flag ideals to check K-stability. \end{remark} 

One benefit of this formalism is that, for test configurations arising from flag ideals, there is an explicit intersection-theoretic formula for the Donaldson-Futaki invariant.

\begin{theorem}\label{dfformulapf}\cite[Theorem 3.2]{O1} For a semi-test configuration of the form $(\scB = Bl_{\scI}X\times\pr^1, \scL-E)$ arising from a flag ideal $\scI$ with $\scB$ normal and Gorenstein in codimension one, the Donaldson-Futaki invariant is given by (up to multiplication by a positive constant) \begin{equation}\label{dfform} \DF = -n(L^{n-1}.K_X)(\scL-E)^{n+1} + (n+1)(L^n)(\scL-E)^n.(\scK_X + K_{\scB/X\times\pr^1}).\end{equation}Here we have denoted by $\scK_X$ the pull back of $K_X$ to $\scB$. The intersection numbers $L^{n-1}.K_X$ and $L^n$ are computed on $X$, while the remaining intersection numbers are computed on $\scB$. Replacing $L$ and $\scL$ by $L^r$ and $\scL^r$ respectively in formula \ref{dfform} gives the formula for the Donaldson-Futaki invariant of a test configuration of the form $(\scB, \scL^r-E)$.
\end{theorem}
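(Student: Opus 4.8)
The plan is to compute the Donaldson-Futaki invariant directly from its definition $\DF = (b_0a_1 - b_1a_0)/a_0^2$ by expressing the four coefficients as intersection numbers. Since $1/a_0^2 > 0$, it suffices to evaluate $b_0a_1 - b_1a_0$ up to positive scaling. The coefficients $a_0,a_1$ are immediate from Hirzebruch--Riemann--Roch on $X$: one has $a_0 = L^n/n!$ and $a_1 = (-K_X\cdot L^{n-1})/2(n-1)!$. The real content is to compute the weight polynomial $w(k) = b_0k^{n+1}+b_1k^n + O(k^{n-1})$ as intersection data on $\scB$.

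First I would reduce the weight to an Euler characteristic on the total space. For $k\gg 0$, relative semi-ampleness of $\scL - E$ together with Serre vanishing gives $R^i\rho_*(\scL-E)^k = 0$ for $i>0$, where $\rho:\scB\to\pr^1$ is the composite, so $\rho_*(\scL-E)^k$ is a $\C^*$-equivariant vector bundle on $\pr^1$ of rank $\scP(k)$. Splitting it into equivariant line bundles, the total weight on the fibre over $0$ minus the total weight over $\infty$ equals, up to the sign fixed by our convention, its degree. Because $\scI$ is supported on $X\times\{0\}$, the blow-up is an isomorphism over $\infty$, where the fibre is $(X,L)$ with the trivial $\C^*$-action; normalising the linearisation so that the weight over $\infty$ vanishes, the total weight over $0$ is exactly $w(k)$. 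Using Riemann--Roch on $\pr^1$ and the Leray spectral sequence, this degree is
\[
w(k) = \chi(\scB,(\scL-E)^k) - \scP(k).
\]

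Next I would apply Riemann--Roch on $\scB$. Writing $\chi(\scB,(\scL-E)^k) = \int_\scB e^{k(\scL-E)}\,\mathrm{Td}(\scB)$, the top two terms are $\frac{(\scL-E)^{n+1}}{(n+1)!}k^{n+1}$ and $\frac{(\scL-E)^n\cdot(-K_\scB)}{2\,n!}k^n$. To match the stated formula I would rewrite $-K_\scB = -K_{\scB/\pr^1} + 2F$, where $F$ is a fibre class and $\rho^*K_{\pr^1} = -2F$; since $(\scL-E)|_F = L$ one has $(\scL-E)^n\cdot F = L^n$, and $K_{\scB/\pr^1} = \scK_X + K_{\scB/X\times\pr^1}$. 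Subtracting $\scP(k)$ cancels the contribution $2L^n$ and yields
\[
b_0 = \frac{(\scL-E)^{n+1}}{(n+1)!}, \qquad b_1 = -\frac{(\scL-E)^n\cdot K_{\scB/\pr^1}}{2\,n!}.
\]
Substituting $a_0,a_1,b_0,b_1$ into $b_0a_1 - b_1a_0$ and multiplying by the positive constant $2n\,(n+1)!\,(n-1)!$ recovers exactly $-n(L^{n-1}\cdot K_X)(\scL-E)^{n+1} + (n+1)(L^n)(\scL-E)^n\cdot(\scK_X + K_{\scB/X\times\pr^1})$.

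The main obstacle is that $\scB$ is only normal and Gorenstein in codimension one, so the smooth Hirzebruch--Riemann--Roch used above is not literally available, and $K_\scB$ must be interpreted as the Weil divisor associated to the dualising sheaf. The point I would argue carefully is that the two leading coefficients $b_0,b_1$ depend only on the behaviour of $\scB$ in codimension $\leq 1$: the degree-$(n+1)$ and degree-$n$ parts of the Riemann--Roch integrand involve only $c_1(\scL-E)$ and the degree-one part of the Todd class, that is $c_1$ of the dualising sheaf, which is well defined precisely because $\scB$ is Gorenstein in codimension one. Comparing with a resolution of singularities $\scB'\to\scB$, the discrepancy divisors are exceptional and supported in codimension $\geq 2$, so they affect only the $O(k^{n-1})$ terms and leave $b_0,b_1$ unchanged; this is exactly why the hypothesis is strong enough. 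The remaining verification --- the sign conventions in the weight computation and the elementary factorial bookkeeping in the final substitution --- is routine.
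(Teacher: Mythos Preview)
The paper does not prove this theorem at all: it is quoted from Odaka's work \cite[Theorem~3.2]{O1} and used as a black box, so there is no in-paper proof to compare against. Your proposal is essentially the standard derivation one finds in Odaka's original paper: asymptotic Riemann--Roch on $X$ for $a_0,a_1$, identification of $w(k)$ with $\chi(\scB,(\scL-E)^k)-\scP(k)$ via the equivariant push-forward to $\pr^1$, and asymptotic Riemann--Roch on $\scB$ for $b_0,b_1$. The factorial bookkeeping you record is correct and reproduces the formula exactly.

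The one place where your sketch is a little loose is the treatment of singularities. Your sentence ``the discrepancy divisors are exceptional and supported in codimension $\geq 2$'' is not quite the right formulation: the exceptional divisors of a resolution $\scB'\to\scB$ are of course codimension one on $\scB'$; what makes the argument work is the projection formula applied to $(\pi^*(\scL-E))^n$ against anything $\pi$-exceptional, together with the fact that on a normal variety the two leading terms of asymptotic Riemann--Roch involve only the self-intersection of the Cartier divisor and its intersection with the canonical Weil divisor. The hypothesis that $\scB$ is normal and Gorenstein in codimension one is exactly what guarantees these intersection numbers are well defined and agree with the computation on a resolution. With that clarification your argument is complete.
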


Note that $\scK_X + K_{\scB/X\times\pr^1} = K_{\scB/\pr^1}$. The benefit of splitting this into two terms is that positivity of the contribution from the second term, the relative canonical divisor over $X\times\pr^1$, can be controlled under assumptions on the singularities of $X$.

\subsection{Log Canonical Thresholds}
The log canonical threshold of a pair $(X,D)$ is a measure of singularity, related to the complex singularity exponent. It takes into consideration both the singularities of $X$ and $D$. See \cite{JK} for more information on log canonical thresholds.

\begin{definition}\label{lc} Let $X$ be a normal variety and let $D=\sum d_iD_i$ be a divisor on $X$ such that $K_X+D$ is $\Q$-Cartier, where $D_i$ are prime divisors. Let $\pi: Y\to X$ be a log resolution of singularities, so that $Y$ is smooth and $\pi^{-1}D \cup E$ has simple normal crossing support where $E$ is the exceptional divisor. We can then write \begin{equation}K_Y - \pi^*(K_X+D) \equiv \sum a(E_i,(X,D))E_i\end{equation} where $E_i$ is either an exceptional divisor or $E_i=\pi_*^{-1}D_i$ for some $i$. That is, either $E_i$ is exceptional or the proper transform of a component of $D$. We usually abbreviate $a(E_i,(X,D))$ to $a_i$. We say the pair $(X,D)$ is \begin{itemize}
\item \emph{log canonical} if $a(E_i,(X,D))\geq -1$ for all $E_i$,
\item \emph{Kawamata log terminal} if $a(E_i,(X,D)) > -1$ for all $E_i$.
\end{itemize} By \cite[Lemma 3.10]{JK} these notions are independent of log resolution.\end{definition}

We will later need a form of inversion of adjunction for log canonicity.

\begin{theorem}\cite{MK}\label{inversionofadjunction} Let $D=D'+D''$ be a $\Q$-divisor with $D'$ an effective reduced normal Cartier divisor and $D''$ an effective $\Q$-divisor which has no common components with $D'$. Then $(X,D)$ is log canonical on some open neighbourhood of $D'$ if and only if $(D',D''|_{D'})$ is log canonical.
\end{theorem}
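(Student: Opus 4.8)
The plan is to deduce this from the adjunction formula together with a connectedness statement, treating the two implications separately; since the assertion is local around $D'$, I may freely shrink $X$ to a neighbourhood of $D'$. First I would record the adjunction formula. Because $D'$ is normal and Cartier, the different $\text{Diff}_{D'}(D'')$ vanishes — there is no non-normal locus to correct and no contribution from $X$ failing to be Cartier along $D'$ — so adjunction reads $(K_X + D' + D'')|_{D'} = K_{D'} + D''|_{D'}$. This identifies $(D', D''|_{D'})$ as the pair whose log canonicity we must compare against that of $(X,D)$, and reduces the theorem to a comparison of discrepancies of $(X,D)$ along divisors centred in $D'$ with those of $(D', D''|_{D'})$.

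For the adjunction direction — $(X,D)$ log canonical near $D'$ implies $(D', D''|_{D'})$ log canonical — I would choose a log resolution $\pi: Y \to X$ of $(X, D' + D'')$ that is an isomorphism over the generic point of $D'$ and whose restriction to the proper transform $\widetilde{D'} = \pi^{-1}_* D'$ is a log resolution of $(D', D''|_{D'})$. Since $D'$ is reduced, $\widetilde{D'}$ occurs with discrepancy exactly $-1$, and adjunction on $Y$ gives $K_{\widetilde{D'}} = (K_Y + \widetilde{D'})|_{\widetilde{D'}}$. Restricting the discrepancy formula for $(X,D)$ to $\widetilde{D'}$ then expresses each $a(E_i|_{\widetilde{D'}}, (D', D''|_{D'}))$ through the $a(E_i, (X,D))$ for divisors $E_i$ meeting $\widetilde{D'}$; if the latter are all $\geq -1$, so are the discrepancies downstairs. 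The one point needing care is that every divisorial valuation over $D'$ is realised by such a restriction, which is exactly where normality of $D'$ is used.

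The inversion direction is the main obstacle. Suppose for contradiction that $(D', D''|_{D'})$ is log canonical but $(X,D)$ is not log canonical in any neighbourhood of $D'$. Then there is a prime divisor $F$ over $X$ with $a(F,(X,D)) < -1$ whose centre meets $D'$, and on a suitable log resolution both $F$ and $\widetilde{D'}$ lie in the non-klt locus $N = \bigcup_{a_i \leq -1} E_i$, with $\widetilde{D'}$ a log canonical place of discrepancy exactly $-1$. I would invoke the Kollár–Shokurov connectedness theorem, which asserts that $N$ is connected in a neighbourhood of each fibre of $\pi$: thus $F$ and $\widetilde{D'}$ are joined by a chain inside $N$, so the locus where the discrepancy drops to $-1$ or below propagates until it meets $\widetilde{D'}$. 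Restricting this configuration to $\widetilde{D'}$ via the adjunction computation of the previous step should then produce a divisor over $D'$ whose discrepancy for $(D', D''|_{D'})$ is $< -1$, contradicting log canonicity.

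The delicate point, and where I expect the real work to lie, is converting the purely topological conclusion of connectedness — that \emph{some} non-klt divisor meets $\widetilde{D'}$ — into a \emph{strict} violation of log canonicity on $D'$: a divisor meeting $\widetilde{D'}$ with discrepancy exactly $-1$ restricts to a still-log-canonical boundary configuration, so one must ensure the strict inequality $a(F,(X,D)) < -1$ is transmitted along the connecting chain down to $D'$. I would handle this by a perturbation argument, replacing $D''$ by $(1+\varepsilon)D''$ or adding a small multiple of an auxiliary divisor through the centre of $F$, so as to push the offending discrepancies strictly below $-1$ while keeping $\widetilde{D'}$ as the log canonical place through which connectedness forces the bad locus to pass; its restriction to $D'$ is then genuinely non-log-canonical, yielding the desired contradiction.
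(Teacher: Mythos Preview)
The paper does not prove this theorem at all: it is stated with a citation to Kawakita \cite{MK} and used as a black box (in the proof of Proposition \ref{logalphaprop}). There is therefore no ``paper's own proof'' to compare your proposal against.

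As for the proposal itself: your outline is the classical one, and the easy (adjunction) direction is correctly sketched. For the inversion direction you have correctly identified both the main tool (Koll\'ar--Shokurov connectedness) and the genuine difficulty, namely that connectedness only tells you a non-klt divisor \emph{meets} $\widetilde{D'}$, not that the strict inequality $a<-1$ survives restriction. Your perturbation idea is exactly how the plt/klt case is handled. However, be aware that the full log canonical statement---precisely the case Kawakita's paper settled---is not obtained by connectedness plus perturbation alone; Kawakita's argument runs a relative MMP to contract the non-klt locus and reduces to a situation where the comparison of discrepancies becomes tractable. So your plan is a reasonable first approximation but, as written, the last paragraph does not close the gap in the genuinely log canonical (as opposed to klt) regime, and you should expect to need substantially more than a naive perturbation there.
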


\begin{definition} We say a variety $X$ is \emph{log canonical} if $(X,0)$ is log canonical. Note in particular that log canonical varieties are normal by assumption. \end{definition}

For a not necessarily log canonical pair $(X,D)$ we can still use the idea of log canonicity to measure singularities.

\begin{definition} Let $X$ be a normal variety, and let $D$ be a $\Q$-divisor. The \emph{log canonical threshold} of a pair $(X,D)$ is \begin{equation} \lct(X,D) = \sup \{\lambda\in\Q_{>0}\ | \  (X,\lambda D) \mathrm{\ is \ log \ canonical} \}. \end{equation}\end{definition}

One can generalise the log canonical threshold of a divisor to general coherent ideal sheaves as follows.

\begin{definition} Let $I \subset \scO_X$ be a coherent ideal sheaf, and let $D$ be an effective $\Q$-divisor on $X$. We say that $\pi: Y\to X$ is a \emph{log resolution} of $I$ and $D$ if $Y$ is smooth and there is an effective divisor $F$ on $Y$ with $\pi^{-1}I = \scO_Y(-F)$ such that $F \cup E\cup\tilde{D}$ has simple normal crossing support, where $\tilde{D}$ is the proper transform of $D$. Let $\pi: Y\to X$ be such a log resolution and assume the pair $(X,D)$ is log canonical. For a real number $c\in\R$, we define the \emph{discrepancy} of $((X,D);cI)$ to be \begin{equation} a(E_i,((X,D);cI)) = a(E_i,(X,D)) - c\val_{E_i}(I).\end{equation} Here by $\val_{E_i}(I)$ we mean the valuation of the ideal $I$ on $E_i$, while the $a(E_i,(X,D))$ are as in Definition \ref{lc}. We say $((X,D);cI)$ is \emph{log canonical} if $a(E_i,((X,D);cI))\geq -1$ for all $E_i$ appearing in a log resolution of $I$ and $D$. The \emph{log canonical threshold} of $((X,D);I)$ is then defined as \begin{equation} \lct((X,D);I) = \sup \{\lambda \in\Q_{>0}\ | \  ((X,D);\lambda I) \mathrm{\ is \ log \ canonical} \}. \end{equation} 
\end{definition} 

\begin{remark}\label{discrepineq}\cite[Proposition 8.5]{JK} For a proper birational map $f: X'\to X$, we have that \begin{equation} \lct((X,D);cI) \leq \min_{E_i\subset X'} \left\{\frac{1+\val_{E_i}K_{X'/X}-\val_{E_i}D}{c\val_{E_i}(I)}\right\},\end{equation} where our convention for the appearance of the $E_i$ is as in Definition \ref{lc}. Equality is achieved on a log resolution, where this is essentially a rephrasing of the definition of the log canonical threshold.\end{remark}

\begin{definition}\label{alphadef} Let $(X,L)$ be a log canonical polarised variety. We define the \emph{alpha invariant} of $(X,L)$ to be \begin{equation} \alpha(X,L) = \inf_{m \in \Z_{>0}}\inf_{D \in |mL|} \lct(X,\frac{1}{m}D).\end{equation} In particular, for $c>0$ the alpha invariant satisfies the scaling property \begin{equation}\alpha(X,cL) =\frac{1}{c} \alpha(X,L).\end{equation} \end{definition}

This definition of the alpha invariant is the algebraic counterpart of Tian's original definition. For further details on the following \emph{analytic} definition, see \cite[Appendix A]{CDS}.

\begin{definition} Let $h$ be a singular hermitian metric on $L$, written locally as $h=e^{-2\phi}$. We define the \emph{complex singularity exponent} $c(h)$ to be \begin{equation} c(h) = \sup \{\lambda\in\R_{>0} | \mathrm{\ for \ all \ } z\in X, h^{\lambda} = e^{-2\lambda\phi} \mathrm{ \ is\ } L^1 \mathrm{\ in \ a \ neighbourhood \ of \ }z \}.\end{equation} We then define Tian's \emph{alpha invariant} $\alpha^{an}(X,L)$ of $(X,L)$ to be \begin{equation} \alpha^{an}(X,L) = \inf_{h \mathrm{ \ with \ } \Theta_{L,h} \geq 0} c(h)\end{equation} where the infimum is over all singular hermitian metrics $h$ with curvature $\Theta_{L,h} \geq 0$. For a compact subgroup $G$ of $\Aut(X,L)$, one defines $\alpha^{an}$ similarly, however considering only $G$-invariant metrics.\end{definition}

\begin{theorem}\cite[Appendix A]{CDS} The alpha invariant $\alpha(X,L)$ defined algebraically equals Tian's alpha invariant $\alpha^{an}(X,L)$. That is, \begin{equation}\alpha(X,L) = \alpha^{an}(X,L).\end{equation}\end{theorem}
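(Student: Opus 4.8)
The strategy is to pass through the local comparison of Demailly and Koll\'ar between the complex singularity exponent and the log canonical threshold, and then to bridge the two global infima by approximating arbitrary positively curved singular metrics by divisorial ones. The basic local input is the following: if the weight of a singular metric is of the form $\phi = \log|f|$ for a holomorphic $f$ with zero divisor $D$, then $|f|^{-2\lambda}$ is locally integrable precisely when $(X,\lambda D)$ is Kawamata log terminal. Passing to a log resolution on which $f$ and the Jacobian of the resolution are simultaneously monomial, both the integrability of $|f|^{-2\lambda}$ and the discrepancy conditions $a(E_i,(X,\lambda D)) \ge -1$ reduce to the same collection of inequalities, one for each $E_i$. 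Hence the complex singularity exponent of a divisorial weight coincides with $\lct(X,D)$, and after scaling the current $\tfrac1m[D]$ has singularity exponent $\lct(X,\tfrac1m D)$.

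The inequality $\alpha^{an}(X,L)\le\alpha(X,L)$ is then immediate. Fix a smooth metric $h_0$ on $L$ with $\Theta_{L,h_0}\ge 0$. For $m\in\Z_{>0}$ and a section $s\in H^0(X,mL)$ with divisor $D=\{s=0\}\in|mL|$, the singular metric $h=h_0\,|s|_{h_0^m}^{-2/m}$ has, by the Poincar\'e--Lelong formula, curvature current equal to $\tfrac1m[D]\ge 0$; it is therefore an admissible competitor in the analytic infimum, and by the local comparison $c(h)=\lct(X,\tfrac1m D)$. Since the divisorial metrics form a subfamily of all positively curved singular metrics on $L$, taking the infimum over $m$ and $D$ gives $\alpha^{an}(X,L)\le\alpha(X,L)$.

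The reverse inequality $\alpha(X,L)\le\alpha^{an}(X,L)$ is the heart of the matter, and the approximation step is where I expect the main difficulty to lie. Given a weight $\phi$ with $\Theta_{L,h}\ge 0$ and finite singularity exponent $c(\phi)$, I would form Demailly's approximating weights $\phi_k=\tfrac{1}{2k}\log\sum_j|\sigma_{k,j}|^2$, where $\{\sigma_{k,j}\}$ runs over a basis of the sections of $kL$ that are square-integrable against $e^{-2k\phi}$. A Bergman-kernel (sub-mean-value) estimate gives $\phi_k\le\phi+O(k^{-1}\log k)$, whence $c(\phi_k)\le c(\phi)$, while the Ohsawa--Takegoshi extension theorem ensures that these section spaces are large enough for $\phi_k$ to genuinely carry the singularities of $\phi$. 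The weight $\phi_k$ has analytic singularities along the base ideal $\mathfrak a_k$ of its defining sections, with $c(\phi_k)=k\cdot\lct(\mathfrak a_k)$; since $c(\phi)<\infty$ forces $\lct(\mathfrak a_k)<1$ for $k$ large, a general member $D_k\in|kL|$ of the corresponding linear system satisfies $\lct(X,D_k)=\lct(\mathfrak a_k)$ by the theory of log canonical thresholds of general elements. Consequently $\lct(X,\tfrac1k D_k)=k\,\lct(\mathfrak a_k)=c(\phi_k)\le c(\phi)$, so that $\alpha(X,L)\le c(\phi)$ for every such $\phi$, and taking the infimum completes the proof. The technical crux, resting on the $L^2$-extension machinery rather than on elementary estimates, is the construction and control of the Demailly approximants together with the identification of their singularities with genuine divisors in $|kL|$.
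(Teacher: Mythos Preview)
The paper does not actually prove this theorem: it is quoted from \cite[Appendix A]{CDS}, and the only argument given is the two-line Remark immediately following it, namely that every divisor produces a singular metric (giving $\alpha^{an}\le\alpha$) and that ``equality follows from approximation techniques for plurisubharmonic functions''. Your sketch is precisely a fleshing-out of that remark along the standard Demailly route, so in spirit you are doing exactly what the cited source does.

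One technical point deserves care. The pointwise estimate you state, $\phi_k\le\phi+O(k^{-1}\log k)$, is not what the Bergman-kernel/sub-mean-value argument actually gives; the correct bound is $\phi_k(z)\le\sup_{B(z,r)}\phi+\frac{n}{k}\log\frac{C}{r}$ for every small $r$, which yields the Lelong-number comparison $\nu(\phi_k,x)\ge\nu(\phi,x)-\frac{n}{k}$ rather than a pointwise inequality. In particular the implication ``$\phi_k\le\phi+C$ hence $c(\phi_k)\le c(\phi)$'' is not available as written. What one uses instead (and what Demailly's appendix does) is either this Lelong-number estimate together with the semicontinuity theorem for complex singularity exponents, or the identification of $\phi_k$ with the weight of a linear subsystem $\Sigma_k\subset|kL|$ and the equality $c(\phi_k)=\lct(X,\tfrac{1}{k}\Sigma_k)$, to conclude $c(\phi_k)\to c(\phi)$. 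The Ohsawa--Takegoshi step you invoke gives the inequality $c(\phi_k)\ge c(\phi)$, which is the opposite of what you need for the final chain; it is the other half of the approximation that does the work here. With that correction the architecture of your argument is the standard one and matches the reference the paper cites.
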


\begin{remark} As every divisor $D \in L$ gives rise to a singular hermitian metric, one sees that $\alpha(X,L) \geq \alpha^{an}(X,L)$. Equality follows from approximation techniques for plurisubharmonic functions. \end{remark}

The main consequence of the definition of the alpha invariant is the following theorem of Tian, which states that certain lower bounds on the alpha invariant imply the existence of a K\"{a}hler-Einstein metric. 

\begin{theorem}\cite[ Theorem 2.1]{GT}\label{tianalphatheorem} Let $G$ be a compact subgroup of $\Aut(X)$ and suppose $X$ is a smooth Fano variety with $\alpha_G(X,-K_X) > \frac{n}{n+1}$. Then $X$ admits a K\"{a}hler-Einstein metric. \end{theorem}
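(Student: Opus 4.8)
The plan is to prove this by the continuity method together with a priori estimates, with the alpha invariant entering precisely at the stage of the $C^0$ estimate. Fix a $G$-invariant K\"{a}hler form $\omega \in c_1(X)$ and let $f$ be its Ricci potential, normalised so that $\mathrm{Ric}(\omega) - \omega = i\partial\bar\partial f$. I would study the Aubin--Yau family of complex Monge--Amp\`{e}re equations $(\omega + i\partial\bar\partial\phi_t)^n = e^{f - t\phi_t}\,\omega^n$ for $t \in [0,1]$, observing that a solution at $t=1$ satisfies $\mathrm{Ric}(\omega_{\phi_t}) = \omega_{\phi_t}$ and is thus exactly a K\"{a}hler--Einstein potential. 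Since $\omega$ and $f$ are $G$-invariant and solutions of each equation are unique, the $\phi_t$ may be taken $G$-invariant throughout, which is what permits the use of $\alpha_G$ in place of $\alpha$.

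Let $S \subseteq [0,1]$ be the set of $t$ for which a solution exists. I would first note $0 \in S$ by Yau's solution of the Calabi conjecture, and that $S \cap [0,1)$ is open by the implicit function theorem: the linearisation at $\phi_t$ is $\Delta_{\omega_{\phi_t}} + t$, which is invertible for $t \in [0,1)$ because $\mathrm{Ric}(\omega_{\phi_t}) = t\,\omega_{\phi_t} + (1-t)\omega > t\,\omega_{\phi_t}$ forces the first nonzero eigenvalue of $-\Delta_{\omega_{\phi_t}}$ to exceed $t$. The substantive point is that closedness of $S$ reduces to a uniform $C^0$ estimate: by Yau's a priori estimates for the Monge--Amp\`{e}re operator, a bound on $\|\phi_t\|_{C^0}$ upgrades to uniform $C^2$ and higher-order H\"{o}lder bounds, so solutions persist to limits. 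Granting the $C^0$ estimate one gets $\sup S = 1$ and $1 \in S$.

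The heart of the argument, and where $\alpha_G(X,-K_X) > \frac{n}{n+1}$ is used, is this $C^0$ estimate. Normalising $\psi_t = \phi_t - \sup_X\phi_t \leq 0$, the identification $\alpha(X,-K_X) = \alpha^{an}(X,-K_X)$ together with the complex-singularity-exponent description of the alpha invariant furnishes a uniform Moser--Trudinger/H\"{o}rmander integrability bound $\int_X e^{-\gamma\psi_t}\,\omega^n \leq C_\gamma$ for every $\gamma < \alpha_G$, with $C_\gamma$ independent of $t$. I would then bring in the Aubin functionals $I_\omega(\phi) = \frac{1}{V}\int_X \phi\,(\omega^n - \omega_\phi^n)$ and $J_\omega(\phi)$, together with the standard inequalities $\frac{1}{n+1}I_\omega \leq J_\omega \leq \frac{n}{n+1}I_\omega$ and $I_\omega - J_\omega \geq 0$. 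Differentiating the energy along the continuity path and integrating the Monge--Amp\`{e}re equation converts the integrability bound into control of $\sup_X\phi_t$ and of the oscillation; the factor $\frac{n}{n+1}$ in the functional inequalities is exactly the threshold that must be beaten, so choosing $\gamma$ with $\frac{n}{n+1} < \gamma < \alpha_G$ closes the estimate and yields a uniform $C^0$ bound.

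The main obstacle will be this $C^0$ estimate, at two points. First, extracting the integrability bound \emph{uniformly} in $t$ is not merely the pointwise inequality $\alpha(X,-K_X) > \frac{n}{n+1}$ but a compactness statement for the family $\{\psi_t\}$ of $\omega$-plurisubharmonic functions, proved via H\"{o}rmander's $L^2$ method and a covering argument; it is here that restricting to $G$-invariant potentials can enlarge the usable exponent to $\alpha_G$. Second, the bookkeeping that turns $\int_X e^{-\gamma\psi_t}\,\omega^n \leq C_\gamma$ into a sup and oscillation bound relies on the sharp constants in the $I_\omega$--$J_\omega$ inequalities and on the monotonicity along the path; tracking these constants is precisely what forces the threshold $\frac{n}{n+1}$ and is the delicate part. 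Once the $C^0$ estimate is established, Yau's regularity theory together with the openness/closedness dichotomy produces a smooth solution at $t=1$, that is, a K\"{a}hler--Einstein metric in $c_1(-K_X)$.
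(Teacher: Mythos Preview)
The paper does not give its own proof of this statement: Theorem~\ref{tianalphatheorem} is quoted from Tian's original paper \cite{GT} purely as background, with no argument supplied. There is therefore nothing in the paper to compare your proposal against. Your outline is a faithful sketch of Tian's original continuity-method proof, and the overall architecture (Aubin--Yau path, openness via the linearisation, reduction of closedness to a uniform $C^0$ estimate, and the use of the alpha invariant through a uniform $e^{-\gamma\psi_t}$ integrability bound combined with the $I$--$J$ functional inequalities to produce the threshold $\tfrac{n}{n+1}$) is correct. One small point worth tightening: the uniform bound $\int_X e^{-\gamma\psi_t}\,\omega^n \leq C_\gamma$ for all $\gamma<\alpha_G$ is not a direct consequence of the identification $\alpha=\alpha^{an}$ alone, but requires in addition a semicontinuity/compactness argument for $\omega$-psh functions normalised by $\sup=0$ (as you acknowledge under ``main obstacle''); in Tian's original paper this is handled via H\"ormander-type $L^2$ estimates and a covering argument, and that step should be cited or carried out explicitly rather than folded into the definition.
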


\section{Alpha Invariants and K-stability}

In this section we provide a \emph{sufficient} condition for polarised varieties $(X,L)$ of dimension $n$ to be K-stable. A fundamental quantity will be the \emph{slope} of a polarised variety.

\begin{definition} We define the \emph{slope} of $(X,L)$ to be \begin{equation} \mu(X,L) = \frac{-K_X.L^{n-1}}{L^n}=\frac{\int_Xc_1(X).c_1(L)^{n-1}}{\int_Xc_1(L)^n}.\end{equation} The slope of a polarised variety is thus a topological quantity which, after rescaling $L$, may be assumed equal to $1$. Note in particular that $\mu(X,-K_X)=1$.\end{definition}

\begin{remark} In \cite{RT}, Ross-Thomas defined a similar quantity, which they also call the slope, defined to be $\frac{n}{2}$ times our definition.\end{remark}   

\begin{theorem}\label{maintheorem} Let $(X,L)$ be a polarised $\Q$-Gorenstein log canonical variety with canonical divisor $K_X$. Suppose that 
\begin{itemize} 
\item[(i)] $\alpha(X,L)>\frac{n}{n+1}\mu(X,L)$ and
\item[(ii)] $-K_X \geq \frac{n}{n+1}\mu(X,L) L$.
\end{itemize}
Then $(X,L)$ is K-stable.\end{theorem}

\begin{remark}\label{lctorder}  Here, for divisors $H,H'$, we write $H\geq H'$ to mean $H-H'$ is nef. Note that both conditions are independent of positively scaling $L$. For $L=-K_X$, the second condition is vacuous, so in this case, this is the algebraic counterpart of a theorem of Tian (Theorem \ref{tianalphatheorem}) and in this case is due to Odaka-Sano \cite[Theorem 1.4]{OS}. The condition that $X$ is log canonical ensures that $\alpha(X,L)\geq 0$, while the condition that $X$ is $\Q$-Gorenstein ensures that $-K_X$ exists as a $\Q$-Cartier divisor. \end{remark}

\begin{remark}\label{calabiyau} If $\mu(X,L)=0$, i.e. $L^{n-1}.K_X=0$, the second condition requires $-K_X$ to be nef. Suppose $-K_X$ is nef but not numerically equivalent to zero, and suppose $L^{n-1}.K_X=0$. Then, by the Hodge Index Theorem \cite[Theorem 1]{TL} we would have $L^{n-2}.K_X^2<0$, contradicting the fact that $-K_X$ is nef. In particular, for the second condition of the theorem to hold, $X$ must either be numerically Calabi-Yau or Fano. In the Calabi-Yau case, this theorem also follows from a theorem due to Odaka \cite[Theorem 1.1]{O2}. \end{remark}

A Corollary of Theorem \ref{maintheorem} is that the automorphism group of $(X,L)$ is discrete. Indeed, if $\Aut(X,L)$ were to admit a one parameter subgroup, this would give two test configurations with Donaldson-Futaki invariants of opposite sign. But K-stability requires strict positivity of the Donaldson-Futaki invariant, a contradiction. 

\begin{corollary}\label{automorphisms} If the criteria of Theorem \ref{maintheorem} are satisfied, then $\Aut(X,L)$ is discrete. \end{corollary}

To prove Theorem \ref{maintheorem}, we first establish an \emph{upper} bound on the alpha invariant.  

\begin{proposition}\label{alphaprop}(c.f. \cite[Proposition 3.1]{OS}) Let $\scB$ be the blow-up of $X\times\pr^1$ along a flag ideal, with $\scB$ normal and Gorenstein in codimension one, $\scL-E$ relatively semi-ample over $\pr^1$ and notation as in Remark \ref{flagconventions}. Denote the natural map arising from the composition of the blow-up map and the projection map by $\Pi: \scB\to\pr^1$. Denote also

\begin{itemize}
\item the discrepancies as: $K_{\mathcal{B} / X \times \pr^1} = \sum a_iE_i$,
\item the multiplicities of $X\times \{0\}$ as: $\Pi^*(X\times \{0\}) = \Pi_*^{-1}(X\times \{0\}) + \sum b_i E_i$,
\item the exceptional divisor as: $\Pi^{-1}\mathcal{I} = \mathcal{O}_{\mathcal{B}}(-\sum c_i E_i)= \mathcal{O}_{\mathcal{B}}(-E).$\end{itemize} 

Then \begin{equation} \alpha(X,L) \leq \min_i\left\{\frac{a_i-b_i+1}{c_i}\right\}. \end{equation}\end{proposition}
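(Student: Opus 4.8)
The plan is to exhibit, for some large $m$, an explicit divisor $D\in|mL|$ that is singular along the cosupport of $\scI$, and to bound $\lct(X,\tfrac1mD)$ by transporting a discrepancy computation on $\scB$ down to $X$. Set $c=\min_i\{(a_i-b_i+1)/c_i\}$. Since $\pi$ is an isomorphism away from $X\times\{0\}$, every $E_i$ is $\pi$-exceptional and hence centred in $X\times\{0\}$; in particular each $c_i\geq1$ and $b_i\geq1$. Because $\alpha(X,L)=\inf_m\inf_{D\in|mL|}\lct(X,\tfrac1mD)$, it suffices to produce a single such $D$ with $\lct(X,\tfrac1mD)\leq c$.

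To construct $D$, I would use the positivity hypothesis on $\scL-E$. As $\scL-E$ is relatively semi-ample over $\pr^1$ and $L$ is ample, for $m\gg0$ divisible the line bundle $(\scL-E)^m=\pi^*p_1^*\scO(mL)\otimes\scO_{\scB}(-mE)$ acquires a nonzero section $s$; its divisor satisfies $\mathrm{div}(s)+mE=\pi^*p_1^*D$ for some $D\in|mL|$, using $H^0(X\times\pr^1,p_1^*\scO(mL))=H^0(X,mL)$ together with normality of $\scB$. Since $\mathrm{div}(s)\geq0$ we obtain $\pi^*p_1^*D\geq mE$, that is $\val_{E_i}(\pi^*p_1^*D)\geq mc_i$ for every $i$. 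This is precisely the point at which the hypothesis on $\scL-E$ is essential: it is a Seshadri-type positivity forcing $|mL|$ to contain divisors whose singularities are at least as deep as those measured by $\scI$.

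Finally I would descend to $X$. Fix $\epsilon>0$, put $\lambda=c+\epsilon$, and consider the pair $(X\times\pr^1,\,X\times\{0\}+\tfrac{\lambda}{m}p_1^*D)$. Computing discrepancies on $\scB$ in the manner of Remark \ref{discrepineq}, and letting $E_j$ attain the minimum defining $c$, the coefficient of $E_j$ in $K_{\scB}-\pi^*(K_{X\times\pr^1}+X\times\{0\}+\tfrac{\lambda}{m}p_1^*D)$ equals $a_j-b_j-\tfrac{\lambda}{m}\val_{E_j}(\pi^*p_1^*D)\leq a_j-b_j-\lambda c_j=-1-\epsilon c_j<-1$, so this pair is not log canonical, and since the centre of $E_j$ lies in $X\times\{0\}$ the failure persists in every neighbourhood of $X\times\{0\}$. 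Now $X\times\{0\}\cong X$ is a reduced, normal (as $X$ is log canonical) Cartier divisor sharing no component with $p_1^*D$, so Theorem \ref{inversionofadjunction} applies and gives that $(X,\tfrac{\lambda}{m}D)$ is not log canonical; hence $\lct(X,\tfrac1mD)<\lambda=c+\epsilon$. Letting $\epsilon\to0$ yields $\alpha(X,L)\leq\lct(X,\tfrac1mD)\leq c$, as required. The main obstacle is this descent together with the existence of $s$: one must verify that the minimising valuation really is supported over the central fibre so that inversion of adjunction can be invoked, and that relative semi-ampleness genuinely supplies the required section; the discrepancy estimate itself only uses the inequality $\val_{E_j}(\pi^*p_1^*D)\geq mc_j$, so it is insensitive to whether $\scB$ is a log resolution.
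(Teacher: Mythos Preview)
Your argument is correct and follows the same overall strategy as the paper: produce a divisor $D\in|mL|$ that vanishes to high order along the cosupport of $\scI$, then bound $\lct(X,\tfrac1mD)$ by a discrepancy computation on $\scB$. The implementations differ in two respects worth noting.

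First, the paper routes the estimate through the ideal $I_0$: it proves $\pi_0^*L-E_0$ is semi-ample on $Bl_{I_0}X$, takes $D\in H^0(X,I_0^m(mL))$, and then runs the chain $\lct(X,\tfrac1mD)\leq\tfrac1m\lct(X,I_0)=\lct((X\times\pr^1,X\times\{0\});I_0)\leq\lct((X\times\pr^1,X\times\{0\});\scI)\leq\min_i\{(a_i-b_i+1)/c_i\}$ using monotonicity of lct under ideal inclusion. You instead work directly with the valuation inequality $\val_{E_i}(\pi^*p_1^*D)\geq mc_i$ and a single discrepancy at the minimising $E_j$, which is arguably more transparent and avoids the formalism of log canonical thresholds of ideal sheaves. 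Note, however, that your assertion that relative semi-ampleness together with ampleness of $L$ produces a global section of $(\scL-E)^m$ is exactly what the paper's $I_0$-argument justifies: since every section of $\pi^*p_1^*L^m$ on $\scB$ descends (by normality) to a constant-in-$t$ section on $X\times\pr^1$, one has $H^0(\scB,(\scL-E)^m)\cong H^0(X,I_0^m(mL))$, and it is the semi-ampleness of $\pi_0^*L-E_0$ that makes this non-zero. Your acknowledgement of this point is well placed.

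Second, for the descent step the paper uses only the product structure (a log resolution of $(X,I_0)$ times $\pr^1$ is a log resolution of $((X\times\pr^1,X\times\{0\});I_0)$), whereas you invoke Kawakita's inversion of adjunction. Both are valid; the paper's version is lighter, but your use of Theorem~\ref{inversionofadjunction} is entirely legitimate and has the advantage that the same machinery handles the log version (Proposition~\ref{logalphaprop}) without modification.
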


\begin{proof} We are seeking an upper bound on the alpha invariant, where this upper bound is related to the flag ideal $\scI = I_0 + (t)I_1 + \hdots + (t^N)$ on $X\times\pr^1$. As the divisors considered in the definition of the alpha invariant are divisors on $X$, we pass from $\scI$ to its first component $I_0$. The choice of $I_0$ is because $I_0$ is a subsheaf of the full flag ideal $\scI$.

Let $\pi_0: Bl_{I_0}X \to X$ be the blow-up of $I_0$ with exceptional divisor $E_0$. We claim $\pi_0^*L-E_0$ is semi-ample. This is equivalent to $I_0^m(mL)$ being base-point free for some $m$. However, as $\scL-E$ is semi-ample restricted each fibre, we know that $\scI^m\pi^*(mL)$ is base-point free on each fibre of $X\times\pr^1$. As $I_0$ is a subsheaf of $\scI$, the result follows.

Choose $m$ sufficiently large and divisible such that $H^0(Bl_{I_0}X, m(\pi_0^*L - E_0)) = H^0(X,I_0^{m}(mL))$ has a section, which exists since multiples of semi-ample line bundles have sections. Let $D$ be in the linear series $H^0(X,I_0^{m}(mL))$. We show that \begin{equation}\alpha(X,L) \leq m\lct(X,D) \leq \min_i\left\{ \frac{a_i-b_i+1}{c_i}\right\}.\end{equation} 

For general ideal sheaves $I,J$, \cite[Property 1.12]{MM} states that $I \subset J$ implies $\lct(X,I) \leq \lct(X,J)$. We therefore see that \begin{equation} \lct(X,D) = \lct(X,I_D)  \leq  \frac{1}{m}\lct(X, I_0).\end{equation}

Note that $X\times\{0\}$ is a divisor on $X\times\A^1$. By a basic form of inversion of adjunction of log canonicity, we have \begin{equation} \lct(X, I_0)  =  \lct((X\times\pr^1, X\times\{0\});I_0).\end{equation} One can see this by taking a log resolution of $((X\times\pr^1,X\times\{ 0\});I_0)$ of the form $\tilde{X}\times\pr^1 \to X\times\pr^1$, where $\tilde{X} \to X$ is a log resolution of $(X,I_0)$. Note that for all divisors $E_i$ over $X$, we have \begin{equation}\val_{E_i}(I_0) \geq \val_{E_i}(\scI).\end{equation} In particular, we see that \begin{align} \lct((X\times\pr^1, X\times\{0\}); I_0) & \leq  \lct((X\times\pr^1, X\times\{0\});\scI)  \\ 
& \leq \min_i \left\{\frac{a_i-b_i+1}{c_i}\right\}. \end{align} The last inequality is by Remark \ref{discrepineq}.

\end{proof}

The final ingredients of the proof of Theorem \ref{maintheorem} are the following lemmas on computing the positivity of terms in Odaka's formula for the Donaldson-Futaki invariant, which are due to Odaka-Sano. We repeat their proof for the reader's convenience.

\begin{lemma}\label{nefdf}\cite[Lemma 4.2]{OS} Let $L$ and $R$ be ample and nef divisors respectively on $X$, with $p^*L = \scL$ and $p^*R = \scR$ where $p: \scB\to X$ is the natural map arising from the composition of the blow-up map $\scB\to X\times\pr^1$ and the projection $X\times\pr^1 \to \pr^1$. Suppose that $\scL-E$ is semi-ample on the blow-up $Bl_{\scI}X\times\pr^1$ for some flag ideal $\scI$. Then \begin{equation} (\scL-E)^n.\scR \leq 0.\end{equation} \end{lemma}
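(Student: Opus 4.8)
The goal is to show $(\scL-E)^n.\scR \leq 0$, where $\scR = p^*R$ is pulled back from $X$ via the composition $\scB \to X\times\pr^1 \to X$, and $\scL - E$ is semi-ample on $\scB = Bl_{\scI}(X\times\pr^1)$. The plan is to exploit semi-ampleness to realise $\scL - E$ as a limit (or multiple) of a genuine pullback, so that the intersection product can be pushed down to $X\times\pr^1$ and then to $X$, where nefness of $R$ forces a sign.

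First I would use the hypothesis that $\scL-E$ is semi-ample to write, after passing to a suitable power $m$, that $m(\scL-E)$ is basepoint-free and hence defines a morphism $\phi: \scB \to \pr^N$ with $m(\scL-E) = \phi^*\scO(1)$. This lets me treat $\scL-E$ as a nef class, so that intersection numbers against it behave well under the projection formula. The key structural observation is that $\scL$ and $\scR$ are both pulled back from $X$ (they carry no information transverse to $X$ beyond what lives on the $\pr^1$ factor only through $\scL$), so I would expand $(\scL-E)^n$ and track which terms survive when intersected with $\scR = p^*R$. Since $\scR$ is a pullback from $X$, and $X$ has dimension $n$, any term in the expansion of $(\scL-E)^n$ that is itself a pullback of an $n$-dimensional class from $X$ will vanish against $\scR$ by dimension reasons on $X$; the surviving contributions come from the exceptional divisor $E$, which is where the fibre structure over $\pr^1$ enters.

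The main step is then a degeneration argument. Because $\scL - E$ is the limit of ample classes (being semi-ample, it is a limit of $\scL - (1-\epsilon)E$ type perturbations, or more directly one compares with the fibrewise-ample model), one can compute $(\scL - E)^n . \scR$ by pushing forward along $\Pi: \scB \to \pr^1$ and comparing fibrewise. On each fibre the class $\scL - E$ restricts to the fibrewise semi-ample model of $L$ on (a blow-up of) $X$, and $\scR$ restricts to $R$. Semi-ampleness forces $(\scL-E)^n$ to have nonpositive intersection with the pulled-back nef class $\scR$: intuitively, $(\scL-E)$ differs from the ample $\scL$ by the effective exceptional $E$, and intersecting the top power against the nef pullback $p^*R$ picks out a negative contribution from $E$.

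The hard part will be making the degeneration/limit rigorous, that is, justifying that one may compute $(\scL-E)^n.\scR$ as a limit of intersection numbers of genuinely nef (even ample) classes and controlling the sign of each correction term coming from the exceptional divisor $E$. I expect the cleanest route is to argue that $\scL - E$, being semi-ample, lies in the closure of the ample cone, and then to use the multilinearity of intersection together with the fact that $\scR = p^*R$ annihilates every pullback term of the wrong dimension; this isolates the exceptional contributions, whose nonpositivity follows from $E$ being $\pi$-exceptional and $R$ being nef on the base $X$. Care is needed because $\scB$ is only assumed normal and Gorenstein in codimension one, so I would phrase all intersection computations numerically and avoid any smoothness assumption, invoking only that intersection theory is well-defined on the normal projective variety $\scB$.
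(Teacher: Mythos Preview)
Your proposal circles the right ideas but never lands on the actual mechanism, and the ``degeneration/limit'' and ``push forward to $\pr^1$'' suggestions are red herrings. The paper's proof is a direct three-step computation with no limits or pushforwards. First, since $\scL$ and $\scR$ are both pulled back from the $n$-dimensional variety $X$, one has $\scL^n.\scR=0$ --- you do identify this. Second, and this is the step you are missing, one telescopes:
\[
-(\scL-E)^n.\scR \;=\; \bigl(\scL^n-(\scL-E)^n\bigr).\scR \;=\; E.\scR.\sum_{j=0}^{n-1}\scL^{\,n-1-j}.(\scL-E)^j.
\]
Third, each summand is nonnegative because $E$ has support in the central fibre of $\scB\to\pr^1$, and restricted to that fibre the classes $\scL$, $\scR$, and $\scL-E$ are all nef (the last one because $\scL-E$ is semi-ample). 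Intersecting nef classes against the effective cycle $E$ gives something $\geq 0$.

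Your stated reason for the sign --- ``$E$ being $\pi$-exceptional and $R$ being nef on the base $X$'' --- is not what drives the inequality. What matters is the support of $E$ in the central fibre over $\pr^1$ (not exceptionality for the blow-up map to $X\times\pr^1$) together with fibrewise nefness of $\scL-E$. Without the explicit factorisation above you have no handle on the cross-terms in the expansion of $(\scL-E)^n$, and the vague appeal to ``isolating exceptional contributions'' does not by itself determine a sign. Drop the degeneration argument and write down the telescoping identity; the proof is then two lines.
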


\begin{proof} 
Firstly note that, because $\scR$ and $\scL$ are the pull back of ample and nef divisors respectively from $X$, which has dimension $n$, we have $\scL^n.\scR=0$. Now note that we have the equality \begin{align} -(\scL-E)^n.\scR &= \scL^{n}.\scR - (\scL-E)^n.(\scR-E) - (\scL-E)^n.E \\ &= E.\scR.(\scL^{n-1}+\scL^{n-2}.(\scL-E)+\hdots +(\scL-E)^{n-1}).\end{align} As $\scL$ is nef and the restriction of $\scL-E$ to the central fibre of the map $\scB\to\pr^1$ is semi-ample, hence nef, and as $E.\scR$ is a non-zero effective cycle with support in the central fibre, the result follows. \end{proof}

\begin{lemma}\cite[Lemma 4.7]{OS}\label{excdf} Let $E=\sum c_iE_i$ be the exceptional divisor of the blow-up $\scB\to X\times\pr^1$. Then \begin{equation} (\scL-E)^n.E_i\geq0.\end{equation} Moreover, strict positivity holds for some $E_i$, that is, \begin{equation}(\scL-E)^n.E > 0.\end{equation} \end{lemma}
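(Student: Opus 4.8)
The plan is to establish the two assertions separately: the inequality $(\scL-E)^n.E_i\geq 0$ by restricting to the central fibre, where $\scL-E$ is nef, and the strict inequality $(\scL-E)^n.E>0$ by comparing $E$ with the class of a general fibre of $\Pi$. For the first assertion, each $E_i$ is an exceptional prime divisor of the blow-up $\pi$, so $\pi(E_i)$ is supported on $X\times\{0\}$ and $E_i$ lies in the central fibre $\scB_0=\Pi^{-1}(0)$. By hypothesis $\scL-E$ is relatively semi-ample over $\pr^1$, so its restriction to $\scB_0$, hence to $E_i$, is semi-ample and therefore nef. Since $E_i$ is an $n$-dimensional projective variety, the top self-intersection of a nef class is non-negative, giving $(\scL-E)^n.E_i=\bigl((\scL-E)|_{E_i}\bigr)^n\geq 0$; summing with the positive coefficients $c_i$ yields $(\scL-E)^n.E\geq 0$.

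For strict positivity I would argue by contradiction: suppose $(\scL-E)^n.E=0$. As every summand is non-negative and every $c_i>0$, this forces $(\scL-E)^n.E_i=0$ for all $i$. A general fibre $F$ of $\Pi$ is isomorphic to $X$ with $(\scL-E)|_F=L$, so $(\scL-E)^n.F=L^n$, and since all fibres are numerically equivalent we also have $(\scL-E)^n.\Pi^*(X\times\{0\})=L^n$. Using the decomposition $\Pi^*(X\times\{0\})=Y+\sum b_iE_i$ from the statement, where $Y=\Pi_*^{-1}(X\times\{0\})$, this reads $(\scL-E)^n.Y+\sum b_i(\scL-E)^n.E_i=L^n$, so under our assumption $(\scL-E)^n.Y=L^n$. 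The goal is then to contradict this equality.

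To do so, observe that $g:Y\to X$ is the blow-up of $X$ along $I_0$, with exceptional divisor $E_0=E|_Y$, so that $\scL|_Y=g^*L$ and $(\scL-E)|_Y=g^*L-E_0$ is nef (indeed semi-ample, exactly as shown in the proof of Proposition \ref{alphaprop}). Since the central fibre is connected and $E\neq 0$ (as $\scI\neq(t^N)$ makes the blow-up non-trivial), $Y$ must meet $E$, so $E_0$ is a non-zero effective divisor with $g(E_0)=Z_0\subsetneq X$. The telescoping identity
\[
(g^*L)^n-(g^*L-E_0)^n=\sum_{j=0}^{n-1}E_0.(g^*L)^{n-1-j}(g^*L-E_0)^j
\]
expresses $L^n-(\scL-E)^n.Y$ as a sum of non-negative terms, and I would isolate the term with $n-1-j=\dim Z_0$. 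Along the fibres of $g|_{E_0}:E_0\to Z_0$ the class $g^*L$ is trivial while $-E_0$ is relatively ample, so a dimension count identifies this mixed term with a positive fibre-degree times $L^{\dim Z_0}.Z_0>0$. Hence $(\scL-E)^n.Y<L^n$, contradicting the equality above and proving $(\scL-E)^n.E>0$.

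The step I expect to be the main obstacle is this last strict inequality on $Y$: one must correctly single out the positive term in the telescoping expansion and justify its positivity from the relative ampleness of $-E_0$ over $Z_0$ together with the ampleness of $L$ on $Z_0$, and one should separately dispose of the degenerate possibilities for $I_0$ (in particular the case in which $I_0$ is trivial, which forces $E_0=0$ and corresponds to an almost trivial configuration). The first inequality, by contrast, is a routine consequence of nefness of $\scL-E$ on the central fibre.
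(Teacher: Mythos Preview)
Your argument for $(\scL-E)^n.E_i\geq 0$ is exactly the paper's: each $E_i$ lies in the central fibre, where $\scL-E$ is semi-ample and hence nef.

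For strict positivity the paper takes a genuinely different route. It does \emph{not} compare $E$ with the fibre class. Instead it proves the stronger statement $(\scL-E)^n.(\scL+nE)>0$ via the polynomial identity
\[
(x-y)^n(x+ny)=x^{n+1}-\sum_{i=1}^n(n+1-i)(x-y)^{n-i}x^{i-1}y^2,
\]
together with a sub-lemma showing $-E^2.(\scL-E)^{n-i}.\scL^{i-1}\geq 0$ and $\scL^s.(-E)^{n+1-s}<0$ (where $s=\dim\Supp(\scO/\scI)$, reduced to $s<n$ by dividing $\scI$ by a power of $t$). Combining $(\scL-E)^n.(\scL+nE)>0$ with $(\scL-E)^n.\scL\leq 0$ from Lemma~\ref{nefdf} gives $(\scL-E)^n.E>0$. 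All of this takes place on $\scB$; the strict transform $Y$ never appears.

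Your fibre-class approach is nonetheless correct and can be completed. The point you flag as the obstacle, namely $(g^*L-E_0)^n<L^n$ on $Y=Bl_{I_0}X$, is handled by the same cutting technique the paper uses in its sub-lemma: intersect with $s=\dim Z_0$ general members of $|m\,g^*L|$ to reduce to $\dim Z_0=0$, and then on each fibre of $g|_{E_0}$ the class $g^*L$ is trivial, so $(g^*L-E_0)|_{\text{fibre}}=(-E_0)|_{\text{fibre}}$ is ample by relative ampleness of $-E_0$, giving strictly positive top self-intersection. What your approach buys is a more transparent geometric picture (everything happens on $X$ and its blow-up); what the paper's approach buys is the auxiliary inequality $(\scL-E)^n.(\scL+nE)>0$, which is used in the remark following Theorem~\ref{maintheorem} to marginally strengthen the hypotheses.

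One correction: the case $I_0=0$ does \emph{not} correspond to an almost trivial configuration. If $I_0=0$ then $\scI=t\scJ$ for a shorter flag ideal $\scJ$, and the paper simply divides by $t$ (this does not change $\scB$). Alternatively, in your framework $I_0=0$ forces $V(\scI)=X\times\{0\}$, so the strict transform $Y$ is empty; then $\Pi^*(X\times\{0\})=\sum b_iE_i$ with all $b_i\geq 1$, and your contradiction hypothesis gives $0=\sum b_i(\scL-E)^n.E_i=L^n>0$ directly.
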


\begin{proof} Since each $E_i$ has support in the central fibre, and $\scL-E$ restricted to the central fibre is semi-ample, hence nef, we have that $(\scL-E)^n.E_i\geq0$. 

To show $(\scL-E)^n.E>0$, we first show $(\scL-E)^n.(\scL+nE) > 0$. Note that we have the equality of polynomials \begin{equation} (x-y)^n(x+ny) = x^{n+1} - \sum_{i=1}^n(n+1-i)(x-y)^{n-i}x^{i-1}y^2.\end{equation}

In fact, the polynomials $(x-y)^{n-i}x^{i-1}y^2$ for $1\leq i \leq n$ are linearly independent over $\Q$, and for all $0<s<n$, the monomial $x^sy^{n+1-s}$ can be written as a linear combination of these polynomials with coefficients in $\Z$.

Note that $\scL^{n+1}=0$, as $\scL$ is the pull back of an ample line bundle from $X$, which has dimension $n$. In particular, we can write 

\begin{equation}\label{techeq1}  (\scL-E)^n.(\scL+nE) = -E^2.\left(\sum_{i=1}^n(n+1-i)(\scL-E)^{n-i}.\scL^{i-1}\right). \end{equation}

Let $s=\dim(\Supp(\scO / \scI))$, where $\scI = I_0 + (t)I_1 +\hdots +(t^N)$. By dividing $\scI$ by a power of $t$ if necessary, which does not change the resulting blow-up $Bl_{\scI}X\times\pr^1$ and hence does not change the Donaldson-Futaki invariant of the associated semi-test configuration, we can assume $s<n$. Perturbing the coefficients in equation (\ref{techeq1}), we get 

\begin{equation}(\scL-E)^n.(\scL+nE) = -E^2.\left(\sum_{i=1}^n(n+1-i+\epsilon_i)(\scL-E)^{n-i}\scL^{i-1}\right) - \epsilon'(\scL^s.(-E)^{n+1-s}) \end{equation} where $0<|\epsilon_i|\ll 1$ and $0 < \epsilon' \ll 1$.

The following lemma then shows that $(\scL-E)^n.(\scL+nE) > 0$.

\begin{lemma}\cite[Lemma 4.7]{OS} \begin{enumerate}[(i)] 
\item $-E^2.(\scL-E)^{n-i}.\scL^{i-1} \geq 0$ for all $0<i<n$.
\item $\scL^s.(-E)^{n+1-s} < 0$.
\end{enumerate} \end{lemma}

\begin{proof} $(i)$ Cutting $\scB$ by general elements of $|r\scL|$ and $|r(\scL-E)|$ for $r\gg0$, we can assume $\dim X = 2$. In this case, the required equation becomes $-E^2.(\scL-E)\geq 0$. Note that $\scL-E$ is semi-ample restricted to fibres of $\scB\to\pr^1$ and $E$ has support in the central fibre. In particular, $E.(\scL-E)$ is an effective cycle with support in fibres of the blow-up map $\scB\to X\times\pr^1$. Since $-E$ is relatively ample over fibres of $\scB\to X\times\pr^1$, we have $-E.(E.(\scL-E))\geq 0$ and the result follows.

$(ii)$ Again cutting $\scB$ by general elements of $\scL^r$ for $r\gg0$, we can assume $s=0$. The required result then follows by relative ampleness of $-E$ over fibres of $\scB\to X\times\pr^1$.

\end{proof}

Finally, since $(\scL-E)^n.(\scL+nE)>0$ and $(\scL-E)^n.\scL \leq 0$ by Lemma \ref{nefdf}, we have $(\scL-E)^n.E>0$ as required.

\end{proof}

\begin{proof} (of Theorem \ref{maintheorem}) We show that the Donaldson-Futaki invariant is positive for all semi-test configurations of the form $(\scB,\scL^r-E)$ arising from flag ideals. We assume $r=1$ for notational simplicity, the proof in the general case is essentially the same. The idea is to first split formula \ref{dfformulapf} for the Donaldson-Futaki invariant into two terms, which we consider separately. We split the Donaldson-Futaki invariant as \begin{align} &\DF(\scB, \scL - E) = \DF_{num}+\DF_{disc}, \\
&\DF_{num} = (\scL-E)^n.(-n(L^{n-1}.K_X)\scL + (n+1)(L^n)\scK_X), \\ 
&\DF_{disc} = (\scL-E)^n.((n+1)(L^n)K_{\scB / X \times \pr^1} + n(L^{n-1}.K_X)E). \end{align}

Our second hypothesis in Theorem \ref{maintheorem} is \begin{equation} -K_X \geq \frac{n}{n+1}\mu(X,L)L.\end{equation} In particular, $n(L^{n-1}.K_X)\scL - (n+1)(L^n)\scK_X$ is nef. So, by Lemma \ref{nefdf}, $\DF_{num}\geq 0$.

As $(\scL-E)^n.E > 0$ by Lemma \ref{excdf}, it suffices to show that there exists an $\epsilon>0$ such that \begin{equation}(n+1)(L^n)K_{\scB / X \times \pr^1} + n(L^{n-1}.K_X)E \geq \epsilon E.\end{equation} Here we mean that each coefficient of $E_i$ is non-negative in the difference of the divisors. As $L^n$ is positive, this is equivalent to showing \begin{equation}K_{\mathcal{B} / X \times \pr^1} - \frac{n}{n+1}\mu(X,L)E \geq \epsilon E.\end{equation} By the first assumption in \ref{maintheorem}, namely that $\alpha(X,L) > \frac{n}{n+1}\mu(X,L)$, we see that \begin{equation}K_{\mathcal{B} / X \times \pr^1} - \frac{n}{n+1}\mu(X,L)E > K_{\mathcal{B} / X \times \pr^1} - \alpha(X,L)E. \end{equation} But by the upper bound on the alpha invariant, Proposition \ref{alphaprop}, we see that
\begin{equation} K_{\mathcal{B} / X \times \pr^1} - \alpha(X,L)E \geq K_{\mathcal{B} / X \times \pr^1} - \min_i\{\frac{a_i-b_i+1}{c_i}\}E. \end{equation} Here we have used notation as in Proposition \ref{alphaprop}. Finally, we see that \[ \begin{array}{lcl}
K_{\mathcal{B} / X \times \pr^1} - \frac{n}{n+1}\mu(X,L)E & > &K_{\mathcal{B} / X \times \pr^1} - \min_i\{\frac{a_i-b_i+1}{c_i}\}\sum c_iE_i \\
& = &\sum a_iE_i - \min_i\{\frac{a_i-b_i+1}{c_i}\}\sum c_iE_i \\
& = &\sum (\frac{a_i-b_i+1}{c_i} - \min_i\{\frac{a_i-b_i+1}{c_i}\} + \frac{b_i-1}{c_i})c_iE_i \\
& \geq & 0.
\end{array} \] The result follows as $(\scL-E)^n.E > 0$, by Lemma \ref{excdf}.

\end{proof}

\begin{remark} One can marginally strengthen Theorem \ref{maintheorem} as follows. The positivity of the alpha invariant is used in the proof of Theorem \ref{maintheorem} as it appears as a coefficient of the exceptional divisor $E$. In particular, one has a term with positive contribution of the form $(\alpha(X,L) - \frac{n}{n+1}\mu(X,L))E$. By the proof of Lemma \ref{excdf}, we have that $(\scL-E)^n.(\scL+nE)>0$. Using this, one can use the positivity of the contribution of the term $(\alpha(X,L) - \frac{n}{n+1}\mu(X,L))E$ to slightly weaken the requirement  that $-K_X \geq \frac{n}{n+1}\mu(X,L) L$. However, the resulting hypothesis still implies that $-K_X$ is either ample or numerically trivial. We therefore omit the details. \end{remark}

\begin{remark} For any compact subgroup $G\subset \Aut(X,L)$, Odaka-Sano \cite[Section 2.2]{OS} have defined a form of stability, which they call \emph{G-equivariant K-stability} and conjecture to be equivalent to K-stability. \end{remark}

\begin{definition} Let $G \subset \Aut(X,L)$ be compact, and define a $G$-test configuration $(\scX,\scL)$ be a test configuration equipped with an extension of the natural $G$-action on $(\scX,\scL)|_{\pi^{-1}(\A^1 - \{0\})}$ to $(\scX,\scL)$ which commutes with the $\C^*$-action on $(\scX,\scL)$. We say $(X,L)$ is \emph{G-equivariantly K-stable} if the Donaldson-Futaki invariant of all $G$-test configuration is strictly positive for all non-trivial test configurations. \end{definition}

As $G$-test configurations give rise to $G$-invariant flag ideals, Theorem \ref{maintheorem} can be adapted to $G$-equivariant K-stability as follows.

\begin{corollary}  Let $(X,L)$ be a polarised $\Q$-Gorenstein log canonical variety with canonical divisor $K_X$, and let $G\subset \Aut(X,L)$ be a compact subgroup. Suppose that
\begin{itemize} 
\item[(i)] $\alpha_G(X,L)>\frac{n}{n+1}\mu(X,L)$ and
\item[(ii)] $-K_X \geq \frac{n}{n+1}\mu(X,L) L$.
\end{itemize}
Then $(X,L)$ is $G$-equivariantly K-stable.\end{corollary}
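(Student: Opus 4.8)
The plan is to rerun the proof of Theorem \ref{maintheorem} essentially verbatim, carrying the group $G$ along at each step. The only genuinely new input is the reduction to $G$-invariant flag ideals, so I would begin there. By definition a $G$-test configuration $(\scX,\scL)$ carries a $G$-action commuting with its $\C^*$-action, and away from the central fibre it is $G$- and $\C^*$-equivariantly isomorphic to $(X\times \A^1, L)$. Applying Odaka's formalism \cite{O1} $G$-equivariantly, $(\scX,\scL)$ is dominated by a blow-up of $X\times\pr^1$ along a flag ideal $\scI$ with the same Donaldson-Futaki invariant; since the construction is canonical and $G$-equivariant, the resulting $\scI$ is $G$-invariant. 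Hence it suffices to show $\DF(\scB, \scL^r - E) > 0$ for every semi-test configuration arising from a $G$-invariant flag ideal $\scI \neq (t^N)$, exactly as in Theorem \ref{maintheorem} but now ranging only over $G$-invariant $\scI$.

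I would next establish the $G$-equivariant form of Proposition \ref{alphaprop}: for a $G$-invariant flag ideal one has $\alpha_G(X,L) \leq \min_i\{(a_i - b_i + 1)/c_i\}$. The key observation is that the first component $I_0 \subseteq \scO_X$ of a $G$-invariant flag ideal is itself $G$-invariant. Consequently the semi-ample system $H^0(X, I_0^m(mL)) = H^0(Bl_{I_0}X, m(\pi_0^* L - E_0))$ is a $G$-representation, and every inequality in the proof of Proposition \ref{alphaprop} except the very first is purely valuative: the inclusion $I_D \subseteq I_0^m$, the inversion of adjunction identifying $\lct(X,I_0)$ with $\lct((X\times\pr^1, X\times\{0\}); I_0)$, the bound $\val_{E_i}(I_0) \geq \val_{E_i}(\scI)$, and Remark \ref{discrepineq} all carry over unchanged. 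It therefore remains only to bound $\alpha_G(X,L)$ by $\lct(X,I_0)$ using a $G$-invariant object, after which the chain $\alpha_G(X,L) \leq \lct(X,I_0) \leq \lct((X\times\pr^1, X\times\{0\}); \scI) \leq \min_i\{(a_i-b_i+1)/c_i\}$ closes as before.

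Granting that bound, the conclusion is immediate, since the remainder of the proof of Theorem \ref{maintheorem} never mentions $G$. Splitting $\DF(\scB, \scL - E) = \DF_{num} + \DF_{disc}$, hypothesis (ii) gives $\DF_{num} \geq 0$ via Lemma \ref{nefdf}, while hypothesis (i) in the strengthened form $\alpha_G(X,L) > \frac{n}{n+1}\mu(X,L)$, combined with the upper bound above and the strict positivity $(\scL-E)^n.E > 0$ from Lemma \ref{excdf}, forces $\DF_{disc} > 0$. The argument is identical to the final estimate in the proof of Theorem \ref{maintheorem}, with $\alpha(X,L)$ replaced throughout by $\alpha_G(X,L)$.

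The main obstacle is the step flagged above: bounding $\alpha_G(X,L)$ by $\lct(X,I_0)$ while respecting $G$-invariance. One cannot in general take a single $G$-invariant divisor $D \in |I_0^m(mL)|$ computing $\lct(X,I_0)$, since $H^0(X, I_0^m(mL))$ need not contain a one-dimensional $G$-subrepresentation (already for $G = SU(2)$ acting through its standard representation no invariant member need exist). The robust way around this is to argue on the analytic side: by the $G$-equivariant form of the identity $\alpha(X,L) = \alpha^{an}(X,L)$, one has $\alpha_G(X,L) = \alpha_G^{an}(X,L) \leq c(h)$ for any positively curved $G$-invariant singular hermitian metric $h$ on $L$, and the $G$-invariant ideal $I_0$ furnishes precisely such a metric, whose complex singularity exponent realises $\lct(X,I_0)$ by virtue of the semi-ampleness of $\pi_0^*L - E_0$. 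This sidesteps the representation-theoretic difficulty entirely and supplies the required inequality.
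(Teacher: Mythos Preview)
Your argument is correct and, in fact, considerably more thorough than what the paper offers. The paper does not give a proof of this corollary at all: it simply records the sentence ``As $G$-test configurations give rise to $G$-invariant flag ideals, Theorem \ref{maintheorem} can be adapted to $G$-equivariant K-stability as follows'' and then states the corollary. Your reduction to $G$-invariant flag ideals and the observation that all the numerical estimates (Lemmas \ref{nefdf} and \ref{excdf}, and the splitting $\DF=\DF_{num}+\DF_{disc}$) are insensitive to $G$ match exactly what the paper intends.

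Where you go further is in confronting a point the paper sweeps under the rug: in Proposition \ref{alphaprop} the bound $\alpha(X,L)\leq \lct(X,I_0)$ is obtained by choosing a single divisor $D\in|I_0^m(mL)|$, and for $\alpha_G$ one would need a $G$-invariant such $D$, which need not exist for non-finite compact $G$. Your resolution via the analytic identification $\alpha_G(X,L)=\alpha_G^{an}(X,L)$ is the right one: the $G$-invariant ideal $I_0$ gives a $G$-invariant linear subsystem of $|mL|$ with base ideal $I_0^m$ (by semi-ampleness of $\pi_0^*L-E_0$), hence a $G$-invariant positively curved singular metric on $L$ whose complex singularity exponent is $\lct(X,I_0)$. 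This is precisely the kind of argument used in \cite[Appendix A]{CDS} and \cite{RB}, and it closes the chain $\alpha_G(X,L)\leq \lct(X,I_0)\leq \min_i\{(a_i-b_i+1)/c_i\}$ as you describe. For finite $G$ one could alternatively average a divisor over the group and observe that $\ord_E$ of the average still dominates $m\,\ord_E(I_0)$; but your analytic route handles all compact $G$ uniformly.
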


\section{Examples}

By showing that the alpha invariant of a line bundle is a continuous function of the line bundle, we first show that the conditions of Theorem \ref{maintheorem} are \emph{open} when varying the polarisation. To prove this, we need a lemma regarding adding ample divisors and alpha invariants.

\begin{lemma}\label{lctampleineq} Let $(X,L)$ be a log canonical polarised variety, and let $D$ be an ample $\Q$-divisor on $X$. Then $\alpha(X,L+D)\leq\alpha(X,L)$. \end{lemma}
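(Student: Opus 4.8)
The plan is to exploit the fact that $L+D$ is \emph{more positive} than $L$: every divisor in a linear system of $L$ can be enlarged, by adding a fixed effective divisor coming from a multiple of $D$, into a divisor in a linear system of $L+D$, and enlarging an effective divisor can only decrease its log canonical threshold. Comparing the two infima defining the alpha invariants then yields the inequality. Concretely, I first recall the monotonicity used already in the proof of Proposition \ref{alphaprop}: by \cite[Property 1.12]{MM}, if $\Delta\geq\Delta'\geq 0$ are effective $\Q$-divisors, then $I_{\Delta}\subseteq I_{\Delta'}$, hence $\lct(X,\Delta)\leq\lct(X,\Delta')$.

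The construction is as follows. Since $D$ is an ample $\Q$-divisor, I choose $k\in\Z_{>0}$ so that $kD$ is an integral (Cartier) divisor which is moreover very ample; in particular $|kD|$ contains an effective member, and I fix one such $G\in|kD|$ with $G\geq 0$. Now take any $m\in\Z_{>0}$ with $|mL|\neq\emptyset$ and any $D'\in|mL|$. Then $kD'\in|kmL|$ and $mG\in|kmD|$, so $kD'+mG\in|km(L+D)|$. The key elementary identity is
\begin{equation} \frac{1}{km}(kD'+mG)=\frac{1}{m}D'+\frac{1}{k}G\geq\frac{1}{m}D', \end{equation}
the inequality holding as an honest inequality of $\Q$-divisors because $\frac{1}{k}G$ is effective. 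Applying the monotonicity above gives
\begin{equation} \lct\left(X,\tfrac{1}{km}(kD'+mG)\right)\leq\lct\left(X,\tfrac{1}{m}D'\right). \end{equation}

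To conclude, I restrict the infimum defining $\alpha(X,L+D)$ to the subfamily of divisors of the form $kD'+mG\in|km(L+D)|$; since restricting an infimum to a subset can only increase its value, $\alpha(X,L+D)$ is bounded above by the infimum over this subfamily, which by the previous display is in turn bounded above by $\inf_m\inf_{D'\in|mL|}\lct(X,\tfrac{1}{m}D')=\alpha(X,L)$. This gives $\alpha(X,L+D)\leq\alpha(X,L)$. I expect the only genuine subtlety to be bookkeeping: ensuring a multiple of the ample $\Q$-divisor $D$ is \emph{effective} (so that $G$ exists) and keeping track of the directions of the two inequalities — that enlarging a divisor lowers its lct, while restricting the range of an infimum raises it — so that they combine in the asserted direction.
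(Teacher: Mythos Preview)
Your proof is correct and follows essentially the same route as the paper's: pick an effective member of a multiple of $D$, add it to (a multiple of) an arbitrary $D'\in|mL|$ to land in a linear system of $L+D$, and use monotonicity of the log canonical threshold under enlarging an effective divisor. The only cosmetic differences are that you cite \cite[Property 1.12]{MM} for the monotonicity whereas the paper uses the discrepancy inequality \cite[Lemma 2.27]{KM}, and you are slightly more careful in explicitly fixing an effective $G\in|kD|$ rather than writing ``ample, hence effective''.
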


\begin{proof} Take any divisor $D'\in |m'L|$. We find a divisor $F \in|p(L+D)|$ such that $\lct(X,\frac{1}{p}F) \leq \lct(X,\frac{1}{m'}D')$. Suppose that $mD$ is a $\Z$-divisor. Let $F=mD'+mm'D \in |mm'(L+D)|$. As $F-mD'=mm'D$ is ample, hence effective, the discrepancies satisfy \begin{equation} a(E_i,(X,F)) \leq a(E_i,(X,mD'))\end{equation} for all divisors $E_i$ over $X$ \cite[Lemma 2.27]{KM}, so we have \begin{equation} \lct(X,\frac{1}{mm'}F) \leq \lct(X,\frac{1}{m'}D').\end{equation} Therefore $\alpha(X,L+D) \leq \alpha(X,L)$. \end{proof}

Using this we can show that the alpha invariant of a polarised variety $(X,L)$ is a continuous function of $L$.
 
\begin{proposition}\label{contlct}Let $(X,L)$ be a polarised klt variety and $D$ be a $\Q$-divisor on $X$. Then for all $\epsilon>0$ there exists a $\delta>0$ depending on $D$ such that $|\alpha(X,L) - \alpha(X,L+\delta D)| < \epsilon$.\end{proposition}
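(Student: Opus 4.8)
The plan is to trap $L+\delta D$ between two rational rescalings $c_1 L$ and $c_2 L$ of $L$ with $c_1<1<c_2$, and then let $c_1,c_2\to 1$. The only two inputs I need are the scaling property $\alpha(X,cL)=\frac{1}{c}\alpha(X,L)$ recorded in Definition \ref{alphadef} and the monotonicity $\alpha(X,L'+A)\leq\alpha(X,L')$ for $A$ an ample $\Q$-divisor, which is exactly Lemma \ref{lctampleineq}. The geometric content is entirely contained in the latter, so this should reduce to a bookkeeping argument.

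First I would use that $L$ is ample and that ampleness is an open condition on the N\'eron--Severi space: fixing rational $c_1<1<c_2$, I can choose a rational $\delta>0$ small enough that all three of
\[ L+\delta D,\qquad (1-c_1)L+\delta D,\qquad (c_2-1)L-\delta D \]
are ample $\Q$-divisors. The first guarantees that $(X,L+\delta D)$ is again a log canonical polarised variety, so $\alpha(X,L+\delta D)$ is defined; the other two are the ample ``errors'' in the sandwich. Keeping $c_1,c_2,\delta$ rational is what lets me apply Lemma \ref{lctampleineq} and the scaling property verbatim.

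Next I would run the sandwich. Writing $L+\delta D=c_1 L+\big((1-c_1)L+\delta D\big)$ with the bracketed term ample, Lemma \ref{lctampleineq} gives $\alpha(X,L+\delta D)\leq\alpha(X,c_1 L)=\frac{1}{c_1}\alpha(X,L)$. Symmetrically, writing $c_2 L=(L+\delta D)+\big((c_2-1)L-\delta D\big)$ with the bracketed term ample yields $\frac{1}{c_2}\alpha(X,L)=\alpha(X,c_2 L)\leq\alpha(X,L+\delta D)$. Combining,
\[ \frac{1}{c_2}\alpha(X,L)\leq\alpha(X,L+\delta D)\leq\frac{1}{c_1}\alpha(X,L). \]

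Finally, the outer bounds differ from $\alpha(X,L)$ by $\frac{1-c_1}{c_1}\alpha(X,L)$ and $\frac{c_2-1}{c_2}\alpha(X,L)$ respectively, which can both be made smaller than $\epsilon$ by taking $c_1,c_2$ close enough to $1$; choosing $\delta$ small as above then forces $|\alpha(X,L)-\alpha(X,L+\delta D)|<\epsilon$. The step requiring the most care is the finiteness of $\alpha(X,L)$, which is exactly what makes the rescaled bounds converge to $\alpha(X,L)$ rather than merely being comparable to it (if $\alpha(X,L)=0$ the sandwich collapses both bounds to $0$ and the conclusion is immediate, so I would only need to argue finiteness in the positive case). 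I do not expect a serious obstacle beyond this, since once finiteness is in hand the estimate is forced by the two cited results.
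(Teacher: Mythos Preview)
Your proposal is correct and follows essentially the same approach as the paper: both arguments sandwich $\alpha(X,L+\delta D)$ between two rescalings of $\alpha(X,L)$ using only Lemma~\ref{lctampleineq} and the scaling property from Definition~\ref{alphadef}, with openness of ampleness supplying the small parameter. The only cosmetic difference is that the paper parametrises by a single $\gamma$ with $\gamma L\pm D$ ample (so that $(1\pm\gamma)L$ play the role of your $c_2L,c_1L$), whereas you choose $c_1<1<c_2$ directly; your worry about finiteness of $\alpha(X,L)$ is unfounded since $L$ ample guarantees nonzero sections of some $|mL|$, whose log canonical thresholds are finite.
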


\begin{proof} Firstly, suppose both $\gamma L+D$ and $\gamma L-D$ are ample for some $0 < \gamma < 1$. By the inverse linearity property of the alpha invariant noted in Definition \ref{alphadef}, we then have \begin{equation}\alpha(X,L) = (1+\gamma)\alpha(X,(1+\gamma)L).\end{equation} Lemma \ref{lctampleineq} implies that subtracting ample divisors raises the alpha invariant. Applying Lemma \ref{lctampleineq} by subtracting $\gamma L-D$ from $(1+\gamma)L$, we see that \begin{equation} (1+\gamma)\alpha(X,(1+\gamma)L) \leq (1+\gamma)\alpha(X,L+D).\end{equation} This in particular implies \begin{equation}\alpha(X,L) - \alpha(X,L+D) \leq \gamma\alpha(X,L+D).\end{equation} On the other hand, since $\gamma L+D$ is ample, applying Lemma \ref{lctampleineq} by adding $\gamma L+D$ to $(1-\gamma)L$ gives \begin{align}\alpha(X,L) &= (1-\gamma)\alpha(X,(1-\gamma)L) \\ & \geq (1-\gamma)\alpha(X,L+D). \label{ineqtech} \end{align} Therefore \begin{equation}|\alpha(X,L) - \alpha(X,L+D)| \leq \gamma\alpha(X,L+D).\end{equation} Note that equation (\ref{ineqtech}) implies that $\alpha(X,L+D) \leq \frac{1}{1-\gamma}\alpha(X,L)$, so we have \begin{equation}\label{conteqn}|\alpha(X,L) - \alpha(X,L+D)| \leq \frac{\gamma}{1-\gamma}\alpha(X,L).\end{equation}

Since ampleness is an open condition, there exists a $c>0$ such that both $L+cD$ and $L-cD$ are ample. 

We now show continuity of the alpha invariant at $L$. Given $\epsilon>0$ let $\delta = \frac{c\epsilon}{2\alpha(X,L)+\epsilon}$. Then both $(\delta c^{-1})L+ \delta D$ and $(\delta c^{-1})L-\delta D$ are ample. In our situation $\gamma = \delta c^{-1} = \frac{\epsilon}{2\alpha(X,L)+\epsilon} < 1$, so we can apply equation (\ref{conteqn}). Noting that $\frac{\delta c^{-1}}{1-\delta c^{-1}} = \frac{\epsilon}{2\alpha(X,L)}$, we therefore have \begin{equation}|\alpha(X,L) - \alpha(X,L+\delta D)| \leq \frac{\epsilon}{2}<\epsilon.\end{equation}

\end{proof}

\begin{corollary}\label{open} Suppose $(X,L)$ is a klt $\Q$-Gorenstein polarised variety such that
\begin{itemize} 
\item[(i)] $\alpha(X,L)>\frac{n}{n+1}\mu(X,L)$ and
\item[(ii)] $-K_X > \frac{n}{n+1}\mu(X,L) L$.
\end{itemize}
Note that both inequalities are strict. Then for all $\Q$-divisors $D$, there exists an $\epsilon>0$ such that $L + \epsilon D$ is K-stable. \end{corollary}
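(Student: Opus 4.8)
The plan is to show that the strict hypotheses holding at $L$ persist, in the form required by Theorem \ref{maintheorem}, under a small perturbation $L \mapsto L+\epsilon D$, and then to invoke that theorem. First I would record the setup. Since $L$ is ample and ampleness is an open condition, $L+\epsilon D$ is ample for all sufficiently small $\epsilon$, and choosing $\epsilon\in\Q_{>0}$ makes $L+\epsilon D$ an ample $\Q$-divisor, hence a polarisation for which K-stability is defined (recall that K-stability makes sense for $\Q$-line bundles and is invariant under scaling). The hypotheses that $X$ be $\Q$-Gorenstein and log canonical are intrinsic to $X$ and so are unaffected by changing the polarisation, so only conditions (i) and (ii) of Theorem \ref{maintheorem} need to be re-verified at $L+\epsilon D$.

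For the first condition I would argue by continuity of both sides in $\epsilon$. The slope $\mu(X,L+\epsilon D)=\frac{-K_X.(L+\epsilon D)^{n-1}}{(L+\epsilon D)^n}$ is a ratio of intersection numbers that are polynomial in $\epsilon$, with denominator equal to $L^n>0$ at $\epsilon=0$; hence $\mu(X,L+\epsilon D)$ is continuous in $\epsilon$ near $0$ and tends to $\mu(X,L)$. Meanwhile $\alpha(X,L+\epsilon D)\to\alpha(X,L)$ by the continuity result Proposition \ref{contlct}. Since the inequality $\alpha(X,L)>\frac{n}{n+1}\mu(X,L)$ is strict, continuity of both sides forces $\alpha(X,L+\epsilon D)>\frac{n}{n+1}\mu(X,L+\epsilon D)$ for all small $\epsilon$, which is condition (i) at $L+\epsilon D$.

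For the second condition I would use openness of the ample cone. The strict hypothesis says that $-K_X-\frac{n}{n+1}\mu(X,L)L$ is ample. As $\epsilon$ varies, the class $-K_X-\frac{n}{n+1}\mu(X,L+\epsilon D)(L+\epsilon D)$ moves continuously in $N^1(X)_{\R}$ (again because $\mu$ is continuous in $\epsilon$), and at $\epsilon=0$ it lies in the open ample cone; hence it remains ample, in particular nef, for all small $\epsilon$, giving condition (ii) at $L+\epsilon D$. Taking $\epsilon\in\Q_{>0}$ small enough to meet both requirements simultaneously, Theorem \ref{maintheorem} then yields K-stability of $(X,L+\epsilon D)$. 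The only point requiring care — rather than a genuine obstacle — is that the second condition concerns nefness and so must be handled through the openness of the ample cone in $N^1(X)_{\R}$ rather than by scalar continuity alone; the substantive input, continuity of the alpha invariant, is already supplied by Proposition \ref{contlct}.
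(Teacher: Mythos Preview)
Your proposal is correct and follows the same approach as the paper, which proves the corollary in one line by invoking Proposition \ref{contlct} and ``continuity of intersections of divisors.'' You have simply spelled out the details: continuity of $\mu$ and of $\alpha$ for condition (i), and openness of the ample cone for condition (ii), before applying Theorem \ref{maintheorem}.
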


\begin{proof} This follows by Proposition \ref{contlct} and continuity of intersections of divisors. \end{proof}

We now apply Theorem \ref{maintheorem} to a general degree one del Pezzo surface $X$. Here the genericity condition means that $|-K_X|$ contains no cuspidal curves, so $\alpha(X,-K_X)=1$ (\cite{IC}, Theorem 1.7). Note that $X$ is isomorphic to the blow-up of $\pr^2$ at a configuration of $8$ points in general position. Denote by $H$ the hyperplane divisor pulled back from $\pr^2$, let $E_i$ be the $8$ exceptional divisors arising from an isomorphism $X\cong Bl_{p_1,...,p_8}\pr^2$ and let $L_{\lambda} = 3H - \sum_{i=1}^7E_i - \lambda E_8$. 

\begin{theorem}\label{delpezzo} $(X,L_{\lambda})$ is K-stable for \begin{equation}\frac{19}{25}\approx \frac{1}{9}(10-\sqrt{10}) < \lambda < \sqrt{10}-2\approx \frac{29}{25}.\end{equation} \end{theorem}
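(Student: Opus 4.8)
The plan is to apply Theorem \ref{maintheorem} to the polarisation $L_{\lambda}$, so the task reduces to verifying its two hypotheses for $\lambda$ in the stated range. Since $X$ is a del Pezzo surface we have $n=2$, so the critical constant is $\frac{n}{n+1}=\frac{2}{3}$. First I would record the intersection-theoretic data on $X\cong Bl_{p_1,\dots,p_8}\pr^2$: one has $H^2=1$, $E_i^2=-1$, $E_i.E_j=0$ for $i\neq j$, $H.E_i=0$, and $-K_X=3H-\sum_{i=1}^8 E_i$. From these I would compute $L_{\lambda}^2=9-7-\lambda^2=2-\lambda^2$ and $-K_X.L_{\lambda}=9-7-\lambda=2-\lambda$, giving the slope
\begin{equation}
\mu(X,L_{\lambda})=\frac{-K_X.L_{\lambda}}{L_{\lambda}^2}=\frac{2-\lambda}{2-\lambda^2}.
\end{equation}
This immediately turns hypothesis (i) into the numerical inequality $\alpha(X,L_{\lambda})>\frac{2}{3}\cdot\frac{2-\lambda}{2-\lambda^2}$, and hypothesis (ii) into a nefness condition on $-K_X-\frac{2}{3}\mu(X,L_{\lambda})L_{\lambda}$.

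**The nef condition (ii).**
For the second hypothesis I would first check that $L_{\lambda}$ is genuinely ample in the relevant range, using the Nakai--Moishezon criterion or by exhibiting $L_{\lambda}$ as a positive combination of the nef generators of the effective cone of the degree one del Pezzo (the $240$ $(-1)$-curves together with $-K_X$). Then I would analyse the $\Q$-divisor $M_{\lambda}:=-K_X-\frac{2}{3}\mu(X,L_{\lambda})L_{\lambda}$, writing it in the $(H,E_i)$ basis and checking that its intersection with every $(-1)$-curve and with $-K_X$ is nonnegative; since $-K_X$ is nef and the surface has finitely many $(-1)$-curves, nonnegativity against these together with the self-intersection condition suffices for nefness. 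I expect this to carve out the upper constraint $\lambda<\sqrt{10}-2$: solving the boundary nefness equality against the relevant extremal curve (presumably $E_8$ or a line through the eighth point) should produce the quadratic whose root is $\sqrt{10}-2$.

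**The alpha invariant bound (i).**
The genuinely hard part is bounding $\alpha(X,L_{\lambda})$ from below, and this is where I would follow the method of Cheltsov \cite{IC} as the paper advertises. The input $\alpha(X,-K_X)=1$ from the genericity hypothesis controls divisors proportional to $-K_X$, but for a general polarisation one must estimate $\lct(X,\tfrac{1}{m}D)$ for effective $D\in|mL_{\lambda}|$ directly. I would decompose an arbitrary such $D$ and bound its multiplicities at points and along the relevant curves (the exceptional $E_i$ and lines/conics through the eight points), using the standard estimate that $\lct$ is controlled by $1/\mathrm{mult}_p$ at a point and by coefficients along curves, and comparing against the known anticanonical bound via the relation $L_{\lambda}=-K_X+(1-\lambda)E_8$. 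The main obstacle is obtaining a clean lower bound $\alpha(X,L_{\lambda})\geq$ (an explicit function of $\lambda$) sharp enough that the inequality $\alpha(X,L_{\lambda})>\frac{2}{3}\mu(X,L_{\lambda})$ holds precisely down to $\lambda>\frac{1}{9}(10-\sqrt{10})$; this requires a careful case analysis of which configurations of curves through the eight points can support a log canonical failure, exactly as in Cheltsov's computation for del Pezzo surfaces, and the lower endpoint should emerge from the worst such configuration.
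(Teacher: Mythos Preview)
Your overall strategy---apply Theorem \ref{maintheorem} and verify hypotheses (i) and (ii)---is exactly the paper's, and your slope computation $\mu(X,L_\lambda)=\frac{2-\lambda}{2-\lambda^2}$ is correct. But you have misattributed which hypothesis produces which endpoint, and this would derail the execution.

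In the paper, \emph{both} endpoints $\frac{1}{9}(10-\sqrt{10})$ and $\sqrt{10}-2$ come from the nefness condition (ii), not from the alpha-invariant bound. After rescaling, $-K_X-\frac{2}{3}\mu(X,L_\lambda)L_\lambda$ is a positive multiple of $3H-\sum_{i=1}^7 E_i - \delta E_8$ with $\delta=\frac{6-4\lambda-\lambda^2}{2+2\lambda-3\lambda^2}$. Testing against the full list of $(-1)$-curves---including the sextics $6H-3E_j-2\sum_{i\neq j}E_i$, not just $E_8$ and lines---shows that nefness is equivalent to $0\le\delta\le\frac{4}{3}$. The inequality $\delta\ge 0$ (intersection with $E_8$) yields the upper bound $\lambda<\sqrt{10}-2$, while $\delta\le\frac{4}{3}$ (intersection with the sextic triple at $E_8$) yields the \emph{lower} bound $\lambda>\frac{1}{9}(10-\sqrt{10})$, via $9\lambda^2-20\lambda+10\le 0$.

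For condition (i) the paper needs nothing sharp: Proposition \ref{alphacalc} gives $\alpha(X,L_\lambda)\ge\min\{\frac{1}{2-\lambda},1\}$, proved by a short argument---take an anticanonical curve $C$ through the non-lc point, peel off its contribution via Lemma \ref{convexity}, then use the multiplicity bound of Lemma \ref{multiplicity} and intersect with $C$. This crude bound already exceeds $\frac{2}{3}\mu(X,L_\lambda)$ on an interval strictly containing $[\frac{1}{9}(10-\sqrt{10}),\sqrt{10}-2]$. So your proposed fine Cheltsov-style case analysis aimed at making (i) cut out exactly the lower endpoint is misdirected: the nefness condition is the bottleneck on both sides, as the paper explicitly remarks after the proof.
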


To prove this result, we first obtain a lower bound for $\alpha(X,L_{\lambda})$ using the following two lemmas.

\begin{lemma}\label{convexity}\cite[Corollary 6]{AW} Let $S$ be a smooth variety, let $p\in S$, and let $D,B$ be effective $\Q$-divisors on $S$ with $p\in S, p\in B$ such that $(S,D)$ is not log canonical at $p$ but $(S,B)$ is log canonical at $p$. Then, for all $c\in [0,1)\cap \Q$, \begin{equation} (S,\frac{1}{1-c}(D-cB))\end{equation} is not log canonical at $p$. \end{lemma}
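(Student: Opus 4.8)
The plan is to reduce the statement to a single inequality between divisorial valuations on a common log resolution, so that the same valuation which detects the failure of log canonicity for $(S,D)$ also detects it for the convex combination. First I would fix a log resolution $\pi: Y \to S$ of $D+B$, so that every relevant valuation is computed along a prime divisor $E$ on $Y$. For such an $E$, write $\ord_E(\cdot)$ for the order of vanishing along $E$ and set $A(E) = 1 + \ord_E(K_{Y/S})$ for its log discrepancy. Then, by Definition \ref{lc} (uniformly for exceptional divisors and for proper transforms of components, since $a(E,(S,\Delta)) = \ord_E(K_{Y/S}) - \ord_E(\Delta)$), a pair $(S,\Delta)$ is log canonical at $p$ precisely when $\ord_E(\Delta) \leq A(E)$ for every prime divisor $E$ on $Y$ with $p \in \pi(E)$.

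Since $(S,D)$ is not log canonical at $p$, there is a prime divisor $E$ on $Y$ with $p \in \pi(E)$ and $\ord_E(D) > A(E)$. Because $(S,B)$ is log canonical at $p$ and this $E$ has center through $p$, the same divisor satisfies $\ord_E(B) \leq A(E)$. The key computation is then to check that this single $E$ already violates log canonicity for $D' = \frac{1}{1-c}(D - cB)$. Since $\ord_E(D') = \frac{1}{1-c}(\ord_E(D) - c\,\ord_E(B))$, the desired inequality $\ord_E(D') > A(E)$ is equivalent to $\ord_E(D) > (1-c)A(E) + c\,\ord_E(B)$. Using $\ord_E(B) \leq A(E)$ together with $c \in [0,1)$ gives $(1-c)A(E) + c\,\ord_E(B) \leq (1-c)A(E) + cA(E) = A(E) < \ord_E(D)$, which is exactly the required inequality. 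As $p \in \pi(E)$, this shows $(S,D')$ is not log canonical at $p$.

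The step I expect to be the main obstacle is conceptual rather than computational: $D - cB$, and hence $D'$, need not be effective, so one cannot treat $D'$ as an honest boundary and must instead work directly with the discrepancy formulation of log canonicity. This is legitimate because $S$ is smooth, so $K_S + D'$ is automatically $\Q$-Cartier and $a(E,(S,D')) = \ord_E(K_{Y/S}) - \ord_E(D')$ is well defined along every prime divisor $E$ on the fixed resolution $Y$. The remaining points are bookkeeping: one takes a single resolution dominating log resolutions of both $D$ and $B$, so that the three valuations $\ord_E(D)$, $\ord_E(B)$ and $\ord_E(K_{Y/S})$ are read off simultaneously, and one checks that the center condition $p \in \pi(E)$ is preserved for the witnessing divisor throughout. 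Beyond this, no genuine difficulty arises; the content of the lemma is the elementary convexity inequality above.
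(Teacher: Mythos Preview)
The paper does not supply a proof of Lemma \ref{convexity}; it is quoted as a black box from \cite[Corollary~6]{AW}. Your argument is correct and is the standard one: on a common log resolution of $D+B$ the failure of log canonicity of $(S,D)$ at $p$ is witnessed by some prime divisor $E$ with centre through $p$, and the elementary convexity inequality $(1-c)A(E)+c\,\ord_E(B)\le A(E)<\ord_E(D)$ shows the same $E$ witnesses failure for $\tfrac{1}{1-c}(D-cB)$. Your observation that $D-cB$ need not be effective, and that this is harmless because smoothness of $S$ makes $K_S+D'$ automatically $\Q$-Cartier so the discrepancy definition applies verbatim, is exactly the point one has to check; in the paper's sole application (Proposition \ref{alphacalc}) the combination is in fact effective, but your more general treatment is the right way to prove the lemma as stated.
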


\begin{lemma}\label{multiplicity}\cite[Lemma 2.4 (i)]{JMG} Let $(S,D)$ be pair consitisting of a smooth surface $S$ and an effective $\Q$-divisor $D$ such that $(S,D)$ is not log canonical at $p$. Then $\mult_pD > 1$.\end{lemma}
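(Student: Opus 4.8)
The plan is to prove the contrapositive: \emph{if $\mult_p D \leq 1$, then $(S,D)$ is log canonical at $p$}. The key idea is to cut $S$ down to a smooth curve through $p$ and apply inversion of adjunction (Theorem \ref{inversionofadjunction}), thereby reducing the two-dimensional log canonicity question to the trivial one-dimensional case, where a $\Q$-divisor is log canonical precisely when its coefficients are at most $1$.

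First I would choose a general smooth curve $C$ through $p$: after fixing local coordinates at $p$, take a general smooth curve germ whose tangent direction avoids the finitely many tangent directions of the components of $D$. Such a $C$ is a reduced, normal (indeed smooth) Cartier divisor on the smooth surface $S$, it shares no component with $D$, and, being transverse to the tangent cone of $D$, it satisfies the standard equality $(C\cdot D)_p = \mult_p D$. After shrinking to a neighbourhood of $p$, I may assume $C\cap \Supp(D) = \{p\}$. Then $D|_C$ is an effective $\Q$-divisor on the smooth curve $C$ supported only at $p$ in this neighbourhood, with coefficient $(C\cdot D)_p = \mult_p D \leq 1$, so $(C, D|_C)$ is log canonical at $p$.

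Applying Theorem \ref{inversionofadjunction} with $D'=C$ and $D''=D$, I conclude that $(S, C+D)$ is log canonical on a neighbourhood of $C$, hence in particular at $p$. Finally, since $C$ is effective, for every divisor $F$ over $S$ one has $a(F,(S,D)) = a(F,(S,C+D)) + \ord_F(C) \geq a(F,(S,C+D)) \geq -1$, so $(S,D)$ is log canonical at $p$. This establishes the contrapositive and hence the lemma. (Equivalently, the argument shows $\lct_p(S,D)\geq 1/\mult_p D$, but we only need the threshold comparison at $\mult_p D \leq 1$.)

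The step requiring the most care is the choice of $C$ together with the verification that $(C\cdot D)_p = \mult_p D$ and that inversion of adjunction applies locally at $p$. The equality of the local intersection number with the multiplicity is the standard fact that a general curve direction meets $D$ with multiplicity exactly $\mult_p D$ (special tangent directions can only increase it). The locality is handled by passing to a small neighbourhood of $p$ in which $C$ meets $\Supp(D)$ only at $p$, so that the hypotheses of Theorem \ref{inversionofadjunction} — namely that $C$ is a reduced normal Cartier divisor sharing no component with $D$ — hold on that neighbourhood. Everything else is formal.
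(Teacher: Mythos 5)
Your proposal is correct, but there is nothing in the paper to compare it against: the paper does not prove this lemma, it quotes it verbatim from \cite[Lemma 2.4 (i)]{JMG} and uses it as a black box in the proof of Proposition \ref{alphacalc}. So your argument has to stand on its own, and it does. The contrapositive reduction, the computation $(C\cdot D)_p=\mult_pC\cdot\mult_pD=\mult_pD$ for a smooth curve $C$ whose tangent line avoids the tangent cone of $D$, the observation that a $\Q$-divisor on a smooth curve is log canonical exactly when its coefficients are at most $1$, the application of Theorem \ref{inversionofadjunction} with $D'=C$, $D''=D$ on a neighbourhood $U$ with $C\cap\Supp(D)\cap U=\{p\}$, and the final discrepancy comparison $a(F,(S,D))=a(F,(S,C+D))+\val_F(C)\geq a(F,(S,C+D))$ are all sound; the shrinking step is genuinely needed (a complete curve would meet $\Supp(D)$ elsewhere with possibly large local intersection, ruining log canonicity of $(C,D|_C)$ away from $p$), and you handle it correctly. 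Two points should be made more precise. First, Theorem \ref{inversionofadjunction} is a statement about divisors on a variety, so $C$ should be realised as an honest curve on a Zariski neighbourhood of $p$ rather than a ``germ'': for instance a general member through $p$ of a very ample linear system, which by Bertini is smooth, reduced, Cartier, has a general tangent direction at $p$, and is not a component of $D$. Second, it is worth saying explicitly that you use only the ``if'' direction of Theorem \ref{inversionofadjunction} (log canonicity of the restriction implies log canonicity of the pair near $D'$), which is the nontrivial, Kawakita direction. For contrast, the proof one finds in the cited literature is typically an induction on the number of blow-ups needed to extract a divisor of discrepancy less than $-1$, using that multiplicity does not increase under blow-up; your inversion-of-adjunction argument is shorter and has the virtue of relying only on a tool the paper has already stated.
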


\begin{proposition}\label{alphacalc} For $X$ be a general del Pezzo surface of degree one as above, and $L_{\lambda}=3H-\sum_{i=1}^7 E_i - \lambda E_8$ with $\lambda \geq 0$, we have\begin{equation} \alpha(X,L_{\lambda})\geq \min\left\{\frac{1}{2-\lambda}, 1\right\}.\end{equation} \end{proposition}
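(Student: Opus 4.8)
The plan is to argue by contradiction, using the anticanonical pencil on $X$ together with Lemmas \ref{multiplicity} and \ref{convexity} in the style of Cheltsov. By Definition \ref{alphadef}, the inequality $\alpha(X,L_\lambda) \ge \min\{\frac{1}{2-\lambda},1\}$ can fail only if there is an effective $\Q$-divisor $D \sim_{\Q} L_\lambda$ and a rational number $c < \min\{\frac{1}{2-\lambda},1\}$ such that $(X,cD)$ is not log canonical at some point $p \in X$. Since $X$ is a smooth surface, Lemma \ref{multiplicity} then gives $\mult_p(cD) > 1$, that is $\mult_p D > 1/c$. The whole proof consists of producing, for each such $p$, a curve through $p$ whose intersection with $L_\lambda$ is too small to be compatible with this multiplicity bound.

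The main curve is the member $C$ of $|-K_X|$ through $p$. As $(-K_X)^2 = 1$, the system $|-K_X|$ is a pencil with a single base point, so for $p$ off this base point there is a unique such $C$, while for $p$ the base point we take a general member; by the genericity hypothesis $|-K_X|$ contains no cuspidal curves, so $C$ is at worst nodal, the pair $(X,C)$ is log canonical, and $\mult_p C \le 2$ with $\mult_p C = 1$ when $p$ is a smooth point of $C$. Since $L_\lambda \cdot C = -K_X \cdot L_\lambda = 2-\lambda$ and $C^2 = 1$, in the case $C \not\subset \Supp D$ we obtain $1/c < \mult_p D \le \mult_p D \cdot \mult_p C \le D \cdot C = 2-\lambda$, whence $c > \frac{1}{2-\lambda}$. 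This contradicts $c < \min\{\frac{1}{2-\lambda},1\}$ for every $\lambda$ in the relevant range, since for $1<\lambda<2$ one has $\frac{1}{2-\lambda}>1$.

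When $C \subset \Supp D$, write $D = aC + \Omega$ with $a>0$ and $\Omega \ge 0$ not containing $C$; intersecting with the ample divisor $-K_X$ gives $a = a\,C\cdot(-K_X) \le D\cdot(-K_X) = 2-\lambda$, so that $ca < 1$ throughout the relevant range. As $(X,C)$ is log canonical at $p$ while $(X,cD)$ is not, Lemma \ref{convexity} applied with $t = ca$ shows that $(X, \frac{c}{1-ca}\Omega)$ is not log canonical at $p$; this divisor is effective and avoids $C$, so the estimate of the previous paragraph applies, and using $\Omega\cdot C = D\cdot C - aC^2 = (2-\lambda)-a$ yields $1 < \frac{c}{1-ca}\big((2-\lambda)-a\big)$, which simplifies to $c > \frac{1}{2-\lambda}$, the same contradiction.

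The value $1$ in the minimum, and the bulk of the work, comes from the special points $p$ at which the argument above degenerates: the base point of $|-K_X|$; the nodal anticanonical members (where $\mult_p C = 2$ only strengthens the bound); and, crucially, points lying on a $(-1)$-curve. There are $240$ such curves, and those of $L_\lambda$-degree one — for instance $E_1,\dots,E_7$, for which $L_\lambda\cdot E_i = 1$ — furnish through such a point a curve yielding $c > 1$ by the same multiplicity estimate. Since $\frac{1}{2-\lambda}\le 1$ exactly when $\lambda \le 1$, the binding estimate switches between the anticanonical bound $\frac{1}{2-\lambda}$ and the $(-1)$-curve bound $1$, producing the stated minimum. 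The main obstacle is the systematic case analysis over these finitely many configurations: one must select the correct low-degree curve (an anticanonical member or a $(-1)$-curve) through $p$, handle the possibility that it — or $E_8$, with $L_\lambda\cdot E_8 = \lambda$ — lies in $\Supp D$ by iterating Lemma \ref{convexity}, and verify in every case that $c$ cannot drop below $\min\{\frac{1}{2-\lambda},1\}$. This bookkeeping, rather than any single intersection estimate, is where the real difficulty lies.
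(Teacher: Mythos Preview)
Your first two paragraphs already constitute a complete proof, and they match the paper's argument almost exactly. The only cosmetic difference is in bounding $a$: the paper intersects with the nef class $H$ to get $\Omega\cdot H = 3(1-a)\geq 0$, hence $a\leq 1$ and $\omega a<1$ from $\omega<1$; you intersect with $-K_X$ to get $a\leq 2-\lambda$, which also yields $ca<1$. Either route works, and the remainder of the computation is identical to the paper's.

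Your third paragraph is a misconception. The anticanonical-curve argument you have just given works uniformly at \emph{every} point $p\in X$: there is always an irreducible $C\in|-K_X|$ through $p$ (members of $|-K_X|$ have anticanonical degree $(-K_X)^2=1$, hence cannot split), the pair $(X,C)$ is always log canonical by the no-cusps hypothesis, and the calculation always terminates in the contradiction $c>\frac{1}{2-\lambda}$. Nothing degenerates at the base point of $|-K_X|$, at nodes of $C$, or at points lying on $(-1)$-curves; no case analysis over the $240$ exceptional curves is required, and the paper performs none. The constant $1$ in $\min\{\frac{1}{2-\lambda},1\}$ is \emph{not} produced by $(-1)$-curves of $L_\lambda$-degree one. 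It simply records that for $\lambda>1$ one assumes only $c<1$, and the derived inequality $c>\frac{1}{2-\lambda}>1$ is already the contradiction; the stated minimum is just the sharpest bound the uniform argument yields. So the ``systematic case analysis'' you anticipate does not exist: your proof is finished at the end of the second paragraph.
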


\begin{proof} Suppose for contradiction $\omega <  \min\{\frac{1}{2-\lambda}, 1\}$, and there exists an effective $\Q$-divisor $D$ with $mD\in |mL_{\lambda}|$ for some $m$ such that $(S,\omega D)$ is not log canonical at some point $p\in X$. Write $D = aC + \Omega$, where $C \in |-K_X|$ is a $\Z$-divisor with $p\in C$, and $C\not\subseteq\Supp(\Omega)$. Note that since $X$ is a \emph{general} degree one del Pezzo surface, we have that $\omega C$ is log canonical by (\cite{JP}, Proposition 3.2). Since $\Omega = D -aC$, we see that \begin{align} \Omega.H &= (D-aC).H  \\ &=(1-a)(-K_X).H + (1-\lambda)E_8.H \\ &= 3(1-a). \end{align} But since $H$ is ample and $\Omega$ is effective, $\Omega.H\geq 0$.  Thus $a\leq 1$, and in particular, $\omega a < 1$.

By Lemma \ref{convexity}, we see that $(S, \frac{1}{1-\omega a}(\omega D - \omega a C))$ is not log canonical at $p$. Note that $\omega D - \omega a C = \omega \Omega$. Therefore $\mult_p(\frac{\omega}{1-\omega a}\Omega)>1$ by Lemma \ref{multiplicity}. But since $C\not\subseteq\Supp(\Omega)$, we have that \begin{equation}\omega C.\Omega \geq \omega \mult_p\Omega> 1-\omega a.\end{equation} Thus \begin{align} \omega(2-\lambda) &= \omega D.C  \\ &=\omega(aC.C + \Omega.C) \\ &> \omega a + 1 - \omega a \\ &= 1. \end{align} But this implies $\omega > \frac{1}{2-\lambda}$, a contradiction.

\end{proof}

Using this lower bound we can prove Theorem \ref{delpezzo}.

\begin{proof} (of Theorem \ref{delpezzo}) For Theorem \ref{maintheorem} to apply, the two equations that must be satisfied are 

\begin{itemize}
\item[(i)] $\alpha(X,L_{\lambda}) > \frac{2}{3}\mu(X,L_{\lambda})$ and
\item[(ii)] $-K_X - \frac{2}{3}\mu(X,L_{\lambda})L_{\lambda}$ is nef.
\end{itemize}

In our case $\mu(X,L_{\lambda}) = \frac{2-\lambda}{2-\lambda^2}$. By Proposition \ref{alphacalc}, for $\lambda\leq 1$, we have $\alpha(X,L_{\lambda}) \geq \frac{1}{2-\lambda}$ and the first condition is always satisfied. When $\lambda\geq 1$, we have $\alpha(X,L_{\lambda}) \geq 1$ and the first condition requires $2-3\lambda^2+2\lambda > 0$, which is true for $\lambda < \frac{1}{3}(1+\sqrt{7})$.

For the second condition to apply, we require \begin{equation} 3H - \sum_{i=1}^7E_i- \frac{6-4\lambda-\lambda^2}{2+2\lambda - 3\lambda^2}E_8\end{equation} to be nef. Note for $\lambda=1$ this holds. 

By the cone theorem \cite[Theorem 3.7]{KM} applied to a del Pezzo surface, to check when a line bundle on a del Pezzo surface is nef, it suffices to check it has non-negative intersection with all curves of negative self-intersection. However, by the adjunction formula, all curves $C$ on a del Pezzo surface of negative self-intersection are exceptional, that is, $C.C=-1$. Therefore, to check when a line bundle is nef on a del Pezzo surface, it suffices to check it has non-negative intersection with all exceptional curves. For the blow-up of $\pr^2$ at $8$ points, from \cite[Theorem 26.2]{YM} we know that the exceptional curves are the proper transforms of:

\begin{itemize}
\item points which are blown up, with class $E_i$, \label{cond:1}
\item lines through pairs of points, with class $H-E_i-E_j$, \label{cond:2}
\item conics through $5$ points, with class $2H-\sum_5E_i$, \label{cond:3}
\item cubics through $7$ points, vanishing doubly at $E_j$ for some $j$, with class $3H - E_j - \sum_7E_i$, \label{cond:4}
\item quartics through $8$ points, vanishing doubly at $E_j, E_k, E_l$, with class $4H-E_i- E_j- E_k-\sum_8E_l$, \label{cond:5}
\item quintics through $8$ points, vanishing doubly at $6$ points, with class $5H - E_j - E_k- 2\sum_6E_i$, \label{cond:6}
\item sextics through $8$ points, vanishing doubly at $7$ points and triply at another, with class $6H - 3E_j - 2\sum_7E_i $. \label{cond:7}
\end{itemize}

For a line bundle of the form $W= 3H - \sum^{7}_{i=1}E_i - \delta E_8$, the first condition requires $\delta\geq 0$, the second and third conditions require $\delta\leq 2$, the fourth, fifth and sixth require $\delta \leq \frac{3}{2}$, while the seventh condition requires $\delta \leq \frac{4}{3}$. In particular, $\delta=\frac{4}{3}$ is the maximal value of $\delta$ with $W$ nef.

The equation that therefore must be satisfied for $3H - \sum_{i=1}^7 E_i - \frac{6-4\lambda-\lambda^2}{2+2\lambda - 3\lambda^2}E_8$ to be nef is \begin{equation}0\leq \frac{6-4\lambda-\lambda^2}{2+2\lambda - 3\lambda^2} \leq \frac{4}{3}.\end{equation} As $\lambda>0$, the condition that $6-4\lambda-\lambda^2\geq 0$ requires $\lambda<\sqrt{10}-2\approx \frac{29}{25}$. The upper bound is equivalent to \begin{equation} 9\lambda^2 - 20\lambda + 10 \leq 0, \end{equation} which is true for $\frac{1}{9}(10-\sqrt{10}) \leq \lambda \leq \frac{1}{9}(10+\sqrt{10})\approx \frac{29}{20}$. Therefore, the range for which both conditions required to apply Theorem \ref{maintheorem} are satisfied is $\frac{1}{9}(10-\sqrt{10}) < \lambda < \sqrt{10}-2$.

\end{proof}

\begin{remark} Note that the lower bound for $\alpha(X,L_{\lambda})$ may not be sharp. A more delicate analysis of the $\Q$-divisors linearly equivalent to $L_{\lambda}$ may provide a sharper lower bound. However, both the upper and lower bounds obtained in Theorem \ref{delpezzo} were given by the requirement that $-K_X \geq \frac{2}{3}\mu(X,L_{\lambda})L_{\lambda}$. Since we calculated exactly for which $\lambda$ that this condition holds, we have calculated precisely the range of $\lambda$ for which Theorem \ref{maintheorem} applies. \end{remark}

\begin{remark}Analytically, Arezzo-Pacard \cite[Theorem 1.1]{AP} have shown that if a general $(X,L)$ admits a constant scalar curvature K\"{a}hler metric in $c_1(L)$, and $\pi: Y\to X$ is the blow-up of $X$ at a point $p$, then $(Y,\pi^*L-\epsilon E)$ admits a constant scalar curvature K\"{a}hler metric in $c_1(\pi^*L-\epsilon E)$ for sufficiently small $\epsilon$, provided $\Aut(X,L)$ is discrete. As the existence of a cscK metric in $c_1(L)$ implies K-stability by work of Stoppa \cite[Theorem 1.2]{JS1} and Donaldson \cite{SD}, this in particular implies that $(X,L_{\lambda})$ as in Theorem \ref{delpezzo} is K-stable for $\lambda$ sufficiently small. Theorem \ref{delpezzo} shows that $(X,L_{\lambda})$ is K-stable for $\frac{1}{9}(10-\sqrt{10}) < \lambda < \sqrt{10}-2$. On the other hand, using the techniques of slope stability, Ross-Thomas \cite[Example 5.30]{RT2} have shown that there are polarisations of a general degree one del Pezzo surface $X$ which are K-\emph{un}stable. It would be interesting to know exactly which polarisations of a general degree one del Pezzo surface are K-stable. \end{remark}

\section{Log Alpha Invariants and Log K-stability}

In this section we extend Theorem \ref{maintheorem} to K-stability with cone singularities along an anti-canonical divisor, which conjecturally corresponds to the existence of cscK metrics with cone singularities along a divisor. For a general introduction to log K-stability, see \cite{OS2}.

\begin{definition} Let $(X,L^r)$ be a normal polarised variety, and let $D$ be an effective integral reduced divisor on $X$. We define a \emph{log test configuration} for $((X,D);L^r)$ to be a pair of test configurations $(\scX,\scL)$ for $(X,L^r)$ and $(\scY,\scL|_{\scY})$ for $(D,L^r|_D)$ with a compatible $\C^*$ action. We denote by $((\scX,\scY);\scL)$ the data of a log test configuration. Denote the Hilbert polynomials of $(X,L)$ and $(D,L|_D)$ respectively as \begin{align}& \scP(k) = \chi(X,L^k)=a_0k^n+a_1k^{n-1}+O(k^{n-2}), \\ & \tilde{\scP}(k) = \chi(D,L|_D^k)=\tilde{a}_0k^{n-1}+\tilde{a}_1k^{n-2}+O(k^{n-3}).\end{align}Denote also the total weights of the $\C^*$-actions on $H^0(\scX_0,\scL^k_0)$ and $H^0(\scY_0,\scL^{k_0}|_{\scY_0})$ respectively as \begin{align}&w(k) = b_0k^{n}+b_1k^{n-1}+O(k^{n-2}), \\ &\tilde{w}(k) = \tilde{b}_0k^{n+1}+\tilde{b}_1k^n+O(k^{n-1}).\end{align} We define the \emph{log Donaldson-Futaki invariant} of $((\scX,\scY); \scL)$ with cone angle $2\pi\beta$ for $0\leq\beta \leq 1$ to be \begin{equation} \DF_{\beta}((\scX,\scY);\scL)= 2(b_0a_1 - b_1 a_0) + (1-\beta)(a_0\tilde{b}_0-b_0\tilde{a}_0).\end{equation} We say that $((X,D);L)$ is \emph{log K-stable with cone angle} $2\pi\beta$ if $\DF_{\beta}((\scX,\scY);\scL)>0$ for all log test configurations $((\scX,\scY);\scL)$ with $\scX,\scY$ normal, Gorenstein in codimension one and such that $((\scX,\scY);\scL)$ is not almost trivial. \end{definition}

Note that the usual Donaldson-Futaki invariant for the test configuration $(\scX,\scL)$ is $\frac{b_0a_1 - b_1 a_0}{a_0^2}$, we have multiplied by $2a_0^2$ for ease of notation. Since K-stability is independent of positively scaling $L$, this makes no difference to the definition of K-stability.

Odaka-Sun \cite{OS2} have extended the blowing-up formalism of Odaka to the log case. Recall that to certain flag ideals $\scI$ on $X\times\A^1$ one can associate a semi-test configuration by the following method. Blowing-up $\scI$ on $X\times\pr^1$, we get a map \begin{equation} \pi: \scB=Bl_{\scI}(X\times\pr^1)\to X\times\pr^1.\end{equation} Denote $\scB_{(D\times\pr^1)}=Bl_{\scI|_{(D\times\pr^1)}}(D\times\pr^1)$ and let $E$ be the exceptional divisor of the blow-up $\pi: \scB\to X\times\pr^1$, that is, $\scO(-E)=\pi^{-1}\scI$. Abusing notation, write $\scL-E$ to denote $(p_1\circ \pi)^*L\otimes \scO(-E)$, where $p_1:X\times\pr^1\to X$ is the natural projection. 

\begin{theorem}\cite[Corollary 3.6]{OS2} A normal polarised variety $(X,L)$ is log K-stable with cone angle $2\pi\beta$ if and only if $\DF_{\beta}((\scB,\scB_{(D\times\pr^1)});\scL^r-E)>0$ for all $r>0$ and for all flag ideals $\scI$ such that $\scB, \scB_{(D\times\pr^1)}$ are normal and Gorenstein in codimension one, $\scL^r-E$ is relatively semi-ample over $\pr^1$ and $\scI \neq (t^N)$.\end{theorem}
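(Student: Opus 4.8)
The plan is to adapt Odaka's proof of the non-logarithmic blowing-up criterion \cite[Corollary 3.11]{O1} (stated above) to the log setting, carrying the ambient test configuration for $(X,L)$ and the accompanying one for $(D,L|_D)$ in parallel. The equivalence is proved in two directions. For the forward direction one shows that every semi-test configuration arising from a flag ideal yields a genuine log test configuration with the same log Donaldson-Futaki invariant; for the reverse direction one shows that every log test configuration is dominated by such a flag-ideal configuration without increasing $\DF_\beta$. The key structural point throughout is that the blow-up $\scB = Bl_\scI(X\times\pr^1)$ and its divisorial counterpart $\scB_{(D\times\pr^1)} = Bl_{\scI|_{D\times\pr^1}}(D\times\pr^1)$ restrict to one another exactly as $\scX$ restricts to the accompanying $\scY$, so that the coupled invariant $\DF_\beta$ is computed from intersection theory on the pair via a logarithmic form of Theorem \ref{dfformulapf}.

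For the ``only if'' direction, suppose $((X,D);L)$ is log K-stable and fix a flag ideal $\scI\neq (t^N)$ with $\scB,\scB_{(D\times\pr^1)}$ normal and Gorenstein in codimension one and $\scL-E$ relatively semi-ample over $\pr^1$. I would first pass from the semi-test configuration to a genuine log test configuration: relative semi-ampleness means a multiple of $\scL-E$ is basepoint-free on fibres, defining a $\C^*$-equivariant morphism to a projective bundle over $\pr^1$ whose image is a genuine log test configuration. The invariant $\DF_\beta$ is unchanged, since it is computed from the weight and Hilbert polynomials of the central fibres, equivalently from the intersection formula applied to $(\scB,\scL-E)$ and to $(\scB_{(D\times\pr^1)},(\scL-E)|_{\scB_{(D\times\pr^1)}})$. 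The condition $\scI\neq (t^N)$ guarantees the resulting configuration is not almost trivial, following Odaka's treatment of this pathology, so positivity of $\DF_\beta$ on all non-almost-trivial log test configurations gives $\DF_\beta((\scB,\scB_{(D\times\pr^1)});\scL-E)>0$.

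For the harder ``if'' direction, take an arbitrary non-almost-trivial log test configuration $((\scX,\scY);\scL)$. First normalise both $\scX$ and $\scY$; by the logarithmic analogue of the Odaka-Sano normalisation estimate \cite[Proposition 2.1]{OS}, normalisation does not increase $\DF_\beta$, so it suffices to treat the normal case. Since $\scX$ is $\C^*$-equivariantly birational to $X\times\pr^1$, I would choose a $\C^*$-invariant flag ideal $\scI$ supported on $X\times\{0\}$ whose blow-up $\scB$ dominates $\scX$; following Odaka, $\scI$ can be arranged so that the Donaldson-Futaki invariant of $(\scB,\scL-E)$ equals that of $(\scX,\scL)$, the scaling freedom $L\mapsto L^r$ being harmless by scale-invariance of $\DF_\beta$. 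The crucial additional point in the log case is that the restriction $\scI|_{D\times\pr^1}$ produces $\scB_{(D\times\pr^1)}$ dominating $\scY$ with matching Donaldson-Futaki contribution for the $D$-component; since $\DF_\beta$ is the fixed linear combination $2(b_0a_1-b_1a_0)+(1-\beta)(a_0\tilde b_0-b_0\tilde a_0)$ of the two component invariants, equality of each component transfers positivity on the flag-ideal side back to $((\scX,\scY);\scL)$.

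The main obstacle is the simultaneous control of the two components: one must verify that a single flag ideal can be chosen to dominate $\scX$ while its restriction to $D\times\pr^1$ simultaneously dominates $\scY$ with the correct (equal) Donaldson-Futaki contribution, and that the adjunction-type relation between $\scB$ and $\scB_{(D\times\pr^1)}$ is compatible with the restriction relationship between $\scY$ and $\scX$ encoded in the definition of a log test configuration. This requires care in relating the weight polynomials $w(k),\tilde w(k)$ to intersection numbers on the two blow-ups via the logarithmic version of Theorem \ref{dfformulapf}, and in ensuring that the Gorenstein-in-codimension-one hypotheses can be achieved for both $\scB$ and $\scB_{(D\times\pr^1)}$ after normalisation.
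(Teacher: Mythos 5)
You should first note a mismatch of expectations: the paper does not prove this statement at all. It is imported as a black box from Odaka--Sun \cite[Corollary 3.6]{OS2}, exactly as the non-log criterion \cite[Corollary 3.11]{O1} is imported earlier, so there is no internal proof to compare your proposal against. Judged on its own terms, your outline does follow the skeleton of the genuine Odaka--Sun argument: the two directions you describe (contracting a flag-ideal semi-test configuration to an honest log test configuration using a relatively base-point-free multiple of $\scL-E$, and conversely normalising an arbitrary log test configuration and dominating it by a flag-ideal blow-up), and the role of $\scI\neq(t^N)$ in excluding almost trivial configurations, are all correctly placed.

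As a proof, however, the proposal has a genuine gap, and you have named it yourself in the last paragraph without closing it: the ``main obstacle'' you defer is not a technical afterthought but the entire content of the corollary. One must produce a single $\C^*$-invariant flag ideal $\scI$ such that $\scB=Bl_{\scI}(X\times\pr^1)$ dominates $\scX$ with unchanged weight polynomial, \emph{and simultaneously} $\scI|_{D\times\pr^1}$ produces $\scB_{(D\times\pr^1)}$ dominating $\scY$ with unchanged weight polynomial $\tilde{w}(k)$; this uses the compatibility in the definition of a log test configuration (so that $\scY$ is recovered from the closure of $D\times\C^*$ inside $\scX$) and a verification that forming the indeterminacy ideal of the birational map from $\scX$ to $X\times\pr^1$ commutes with restriction to $D$. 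Everything else in your outline is formal once this is done. In addition, two steps are asserted rather than proved: the log analogue of the normalisation inequality \cite[Proposition 2.1]{OS} cannot simply be cited from the absolute case, since $\DF_{\beta}$ contains the extra term $(1-\beta)(a_0\tilde{b}_0-b_0\tilde{a}_0)$ whose behaviour under normalisation of $\scX$ and $\scY$ must be controlled separately; and the invariance of both weight polynomials under the contraction in the ``only if'' direction needs the pushforward of the structure sheaf to be trivial on both the ambient space and the divisorial component. So what you have is a correct plan faithful to \cite{OS2}, but not a proof; the paper's choice to cite the result rather than reprove it is what your write-up would also, in effect, have to do.
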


Moreover, there is an explicit formula for the log Donaldson-Futaki invariant for log test configurations arising from flag ideals.

\begin{theorem}\cite[Theorem 3.7]{OS2} With all notation as above, we have \begin{align}\label{logdfform} \DF_{\beta}&((\scB,\scB_{(D\times\pr^1)});\scL-E) = -n(L^{n-1}.(K_X+(1-\beta)D))(\scL-E)^{n+1} + \\ &+ (n+1)(L^n)(\scL-E)^n.(\scK_X + (1-\beta)\scD + (K_{\scB / ((X,(1-\beta)D)\times\pr^1})_{exc}).\end{align}  Here we have denoted by $\scK_{\scB / ((X,(1-\beta)D)\times\pr^1})_{exc}$ the exceptional terms of $K_{\scB} - \pi^*(K_{X\times\pr^1}+(1-\beta)D)$, and $\scK_X$ the pull back of $K_X$ to $\scB$. The intersection numbers $L^{n-1}.K_X$ and $L^n$ are computed on $X$, while the remaining intersection numbers are computed on $\scB$. Replacing $L$ and $\scL$ by $L^r$ and $\scL^r$ respectively in formula \ref{logdfform} gives the formula for the Donaldson-Futaki invariant of a test configuration of the form $(\scB, \scL^r-E)$. \end{theorem}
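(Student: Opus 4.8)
The plan is to reduce the whole statement to (equivariant) asymptotic Riemann--Roch, computing the six coefficients $a_0,a_1,b_0,b_1,\tilde a_0,\tilde b_0$ that enter the definition of $\DF_{\beta}$ as intersection numbers on $\scB$ and on the divisor $\scB_{(D\times\pr^1)}$, and then substituting. The first observation is that the summand $2(b_0a_1-b_1a_0)$ is exactly $2a_0^2$ times the ordinary Donaldson--Futaki invariant $\DF(\scB,\scL-E)$ of the underlying semi-test configuration, so by Theorem \ref{dfformulapf} it already equals a fixed positive multiple of $-n(L^{n-1}.K_X)(\scL-E)^{n+1}+(n+1)(L^n)(\scL-E)^n.(\scK_X+K_{\scB/X\times\pr^1})$. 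Hence the only genuinely new work is to identify the boundary correction $(1-\beta)(a_0\tilde b_0-b_0\tilde a_0)$ with the terms carrying the factor $(1-\beta)$ in the claimed formula.

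For the Hilbert-polynomial coefficients I would use Riemann--Roch on $X$ and on $D$, giving $a_0=\frac{L^n}{n!}$, $a_1=-\frac{L^{n-1}.K_X}{2(n-1)!}$ and $\tilde a_0=\frac{L^{n-1}.D}{(n-1)!}$. For the weight coefficients I would use the standard identification of the total weight of the central-fibre $\C^*$-action with an Euler characteristic on the compactified total space: pushing $(\scL-E)^k$ forward to $\pr^1$ produces a $\C^*$-equivariant bundle whose degree records $w(k)$, and Riemann--Roch on $\pr^1$ together with vanishing of higher direct images for $k\gg0$ reduces $w(k)$ to $\chi(\scB,(\scL-E)^k)$ up to a lower-order fibre correction. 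Asymptotic Riemann--Roch on the $(n+1)$-dimensional $\scB$ then gives $b_0=\frac{(\scL-E)^{n+1}}{(n+1)!}$ (the subleading $a_1,b_1$ data feeding directly into Theorem \ref{dfformulapf} for the first summand, via $K_{\scB/\pr^1}=\scK_X+K_{\scB/X\times\pr^1}$). Running the identical argument for the induced semi-test configuration on the $(n-1)$-dimensional $(D,L|_D)$ gives $\tilde b_0=\frac{((\scL-E)|_{\scB_{(D\times\pr^1)}})^n}{n!}$; the key geometric point is that $\scB_{(D\times\pr^1)}$ is the strict transform $\widetilde{\scD}$ of $D\times\pr^1$ in $\scB$ --- valid since $D\times\pr^1\not\subset\Supp(\scO/\scI)$ --- so that $((\scL-E)|_{\scB_{(D\times\pr^1)}})^n=(\scL-E)^n.\widetilde{\scD}$.

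Substituting the four leading coefficients, I obtain $a_0\tilde b_0-b_0\tilde a_0=\frac{1}{(n+1)!\,n!}\big[(n+1)(L^n)(\scL-E)^n.\widetilde{\scD}-n(L^{n-1}.D)(\scL-E)^{n+1}\big]$, which carries the same positive normalisation $\frac{1}{(n+1)!\,n!}$ as $2(b_0a_1-b_1a_0)$. This common normalisation is precisely what the factor $2$ in the definition of $\DF_{\beta}$ arranges: the $X$-contribution acquires a $\frac{1}{2}$ from the subleading coefficients $a_1,b_1$, whereas the boundary contribution is assembled from leading coefficients only. Adding the two summands therefore merges $K_X$ with $(1-\beta)D$ inside the $(\scL-E)^{n+1}$ term and adds $(1-\beta)\widetilde{\scD}$ to $\scK_X+K_{\scB/X\times\pr^1}$ inside the $(\scL-E)^n$ term, up to the overall constant $\frac{1}{(n+1)!\,n!}$ matching Theorem \ref{dfformulapf}.

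The final and most delicate step is the divisor bookkeeping needed to match the stated form. My computation naturally produces the strict transform $\widetilde{\scD}$, whereas the theorem is phrased using the total transform $\scD=\pi^*(D\times\pr^1)$ (consistent with $\scK_X$ denoting the pullback of $K_X$) together with the exceptional part of $K_{\scB}-\pi^*(K_{X\times\pr^1}+(1-\beta)(D\times\pr^1))$. Writing $\widetilde{\scD}=\scD-(\pi^*(D\times\pr^1))_{exc}$ and using $K_{\scB}-\pi^*(K_{X\times\pr^1}+(1-\beta)(D\times\pr^1))=K_{\scB/X\times\pr^1}-(1-\beta)(\pi^*(D\times\pr^1))_{exc}$, whose exceptional part is $(K_{\scB/((X,(1-\beta)D)\times\pr^1)})_{exc}$, one verifies the divisor identity $K_{\scB/X\times\pr^1}+(1-\beta)\widetilde{\scD}=(1-\beta)\scD+(K_{\scB/((X,(1-\beta)D)\times\pr^1)})_{exc}$, which is exactly the combination appearing in the claim. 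The main obstacle I anticipate is making the weight--Euler-characteristic identification fully rigorous while tracking the compactification and $\scK_{\pr^1}$ contributions, so that the fibre and constant lower-order corrections cancel as they already do in Theorem \ref{dfformulapf}; once that normalisation is pinned down, the strict-versus-total transform correction is absorbed cleanly into the exceptional relative log canonical term.
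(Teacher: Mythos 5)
The paper does not prove this statement --- it is quoted verbatim from Odaka--Sun \cite[Theorem 3.7]{OS2} --- so there is no internal proof to compare against; judged on its own terms, your reconstruction is correct and follows the same route as the original argument: Theorem \ref{dfformulapf} disposes of the summand $2(b_0a_1-b_1a_0)$, a Wang-type weight--Euler-characteristic identification plus asymptotic Riemann--Roch gives $\tilde b_0=\tfrac{1}{n!}(\scL-E)^n.\widetilde{\scD}$ via the fact that $\scB_{(D\times\pr^1)}$ is the strict transform of $D\times\pr^1$ (legitimate because $\Supp(\scO/\scI)\subset X\times\{0\}$), and the strict-versus-total transform identity $K_{\scB/X\times\pr^1}+(1-\beta)\widetilde{\scD}=(1-\beta)\scD+(K_{\scB/((X,(1-\beta)D)\times\pr^1)})_{exc}$ converts the sum into the stated form. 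Two small points: the equality holds, as in Theorem \ref{dfformulapf}, only up to the overall positive constant $\tfrac{1}{n!\,(n+1)!}$, which is harmless since the paper only ever uses the sign of $\DF_{\beta}$, and your reading of the weight polynomials silently corrects the paper's typo in their degrees ($w$ has degree $n+1$ in $k$, $\tilde w$ degree $n$), which is indeed the only reading under which the formula is dimensionally consistent. Finally, your displayed identity $K_{\scB}-\pi^*(K_{X\times\pr^1}+(1-\beta)(D\times\pr^1))=K_{\scB/X\times\pr^1}-(1-\beta)(\pi^*(D\times\pr^1))_{exc}$ is not literally true as written --- the right-hand side is only the \emph{exceptional part} of the left, the term $-(1-\beta)\widetilde{\scD}$ being omitted --- but this is a slip of exposition rather than of substance, since the exceptional part is exactly what you subsequently use.
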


We can extend the definition of the alpha invariant to the log setting as follows.

\begin{definition} Let $((X,D);L)$ consist of a log canonical pair $(X,D)$ with $L$ an ample $\Q$-line bundle. We define the log alpha invariant of $((X,D);L)$ to be \begin{equation} \alpha((X,D);L) = \inf_{m \in \Z_{>0}}\inf_{F \in |mL|} \lct((X,D);\frac{1}{m}F). \end{equation}\end{definition}

Berman \cite[Section 6]{RB} has provided an analytic counterpart to the log alpha invariant as follows.

\begin{definition}Let $((X,D);L)$ consist of a Kawamata log terminal pair $(X,D)$ with $X$ smooth, $L$ an ample $\Q$-line bundle and $D=\sum d_iD_i$ a simple normal crossing divisor with $D_i = \{f_i=0\}$. Let $h$ be a singular hermitian metric on $L$, written locally as $h=e^{-2\phi}$. We define the \emph{complex singularity exponent} $c_D(h)$ to be \begin{equation} c(h) = \sup \left\{\lambda\in\R_{>0} | \mathrm{\ for \ all \ } z\in X, h^{\lambda} = e^{-2\lambda\phi}\prod |f_i|^{-\lambda d_i}  \mathrm{ \ is\ } L^1 \mathrm{ \ near \ } z \right\}.\end{equation} We then define Tian's \emph{log alpha invariant} $\alpha^{an}((X,D);L)$ of $((X,D);L)$ to be \begin{equation} \alpha^{an}((X,D);L) = \inf_{h \mathrm{ \ with \ } \Theta_{L,h} \geq 0} c(h)\end{equation} where the infimum is over all singular hermitian metrics $h$ with curvature $\Theta_{L,h} \geq 0$. For a compact subgroup $G$ of $\Aut(X,L)$, one defines $\alpha^{an}$ similarly, however considering only $G$-invariant metrics.\end{definition}

\begin{theorem} \cite[Section 6]{RB} The log alpha invariant $\alpha((X,D);L)$ defined algebraically equals Tian's log alpha invariant $\alpha^{an}((X,D);L)$. That is, \begin{equation}\alpha((X,D);L) = \alpha^{an}((X,D);L)\end{equation}\end{theorem}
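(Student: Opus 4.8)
The plan is to adapt the argument establishing the non-logarithmic equality $\alpha(X,L) = \alpha^{an}(X,L)$, recorded earlier in the excerpt following \cite[Appendix A]{CDS}, so as to account for the boundary divisor $D$. The essential bridge between the algebraic and analytic pictures is the classical Demailly--Koll\'ar principle that the complex singularity exponent of the metric attached to an algebraic divisor coincides with a log canonical threshold. Concretely, given $F \in |mL|$ defined locally by a holomorphic function $g$, one equips $L$ with the singular metric $h_F = e^{-2\phi_F}$ whose weight is $\phi_F = \frac{1}{m}\log|g|$ plus a smooth term; its curvature is the positive current $\frac{1}{m}[F]$ together with a fixed smooth form, and is $\geq 0$ for a suitable reference metric. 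Writing $D = \sum d_i D_i$ with $D_i = \{f_i = 0\}$, a purely local integrability computation then gives $c_D(h_F) = \lct((X,D);\frac{1}{m}F)$, since in both cases one is measuring the largest $\lambda$ for which $|g|^{-2\lambda/m}\prod_i |f_i|^{-2\lambda d_i}$ is locally $L^1$ near each point.

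First I would establish $\alpha^{an}((X,D);L) \leq \alpha((X,D);L)$. By the construction above, every divisor $F \in |mL|$ produces a positively curved singular metric $h_F$ with $c_D(h_F) = \lct((X,D);\frac{1}{m}F)$. Since the analytic infimum runs over the strictly larger class of \emph{all} positively curved singular metrics on $L$, it is bounded above by the infimum over this algebraic subfamily, which is exactly $\alpha((X,D);L)$.

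The reverse inequality $\alpha^{an}((X,D);L) \geq \alpha((X,D);L)$ is where the genuine work lies, and it rests on Demailly's regularisation theorem for plurisubharmonic functions. Given an arbitrary positively curved singular metric $h = e^{-2\phi}$, one approximates the weight $\phi$ by weights of the form $\phi_k = \frac{1}{2k}\log \sum_j |\sigma_j|^2$, where $\{\sigma_j\}$ is a basis of $H^0(X, kL)$ orthonormal for the $L^2$ pairing twisted by $e^{-2k\phi}$ (norms taken against a fixed reference metric). Demailly's theorem controls the Lelong numbers, hence the singularity exponents, of the $\phi_k$ in terms of those of $\phi$, yielding $c_D(h) \geq \liminf_k c_D(h_{\phi_k})$; as the $h_{\phi_k}$ are algebraic, being built from sections of $kL$, each term on the right is at least $\alpha((X,D);L)$ up to an error tending to $0$. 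Combining the two inequalities gives the claimed equality.

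The main obstacle is to run this regularisation \emph{in the presence of the fixed boundary weight} $\prod_i|f_i|^{-2\lambda d_i}$ while preserving the comparison of singularity exponents, since integrability exponents are not continuous under mere $L^1$ convergence of weights. The key observation is that the boundary contribution is a fixed divisorial weight, independent of $k$, and is a simple normal crossings datum by hypothesis; one may therefore absorb it into the ambient reference measure and apply the Demailly--Koll\'ar semicontinuity theorem for the twisted integrability exponent. Equivalently, a direct local analysis along each component $D_i$, where the normal crossings structure lets one separate the $|f_i|^{-2\lambda d_i}$ factors from the approximated weight $\phi_k$, shows that the boundary term commutes with the regularisation. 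This reduces the log case to the already-established non-logarithmic approximation estimate.
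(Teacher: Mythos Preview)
The paper does not supply its own proof of this statement; it is simply quoted from Berman \cite[Section 6]{RB}, exactly as the non-log equality was quoted from \cite[Appendix A]{CDS}. There is therefore nothing in the paper to compare your proposal against.

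That said, your sketch is the standard route and is essentially the argument Berman (and Cheltsov--Shramov in the non-log case) give. Two technical points deserve more care if you write it out in full. First, the Bergman approximants $\phi_k = \tfrac{1}{2k}\log\sum_j|\sigma_j|^2$ are not attached to a single divisor in $|kL|$ but to a full basis, so to bound $c_D(h_{\phi_k})$ below by the algebraic $\alpha((X,D);L)$ one uses that the singularity exponent of such a ``Bergman'' weight is governed by the worst section, reducing to the divisorial case. Second, the direction of the estimates needs attention: regularisation makes $\phi_k$ \emph{less} singular than $\phi$, so $c_D(h_{\phi_k}) \geq c_D(h)$ is automatic but useless; what one actually needs is the Demailly bound $\nu(\phi,x) - \tfrac{n}{k} \leq \nu(\phi_k,x)$ on Lelong numbers, together with the Demailly--Koll\'ar semicontinuity/openness result, to conclude that $c_D(h)$ cannot drop strictly below $\lim_k c_D(h_{\phi_k})$. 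Your remark that the fixed simple-normal-crossings boundary weight can be absorbed into the reference measure, thereby reducing the log statement to the already-known non-log approximation estimates, is exactly the right observation.
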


We can now extend Theorem \ref{maintheorem} to the log setting. 

\begin{theorem}\label{logmaintheorem} Let $((X,D);L)$ consist of a $\Q$-Gorenstein log canonical pair $(X,D)$ with canonical divisor $K_X$, such that $D$ is an effective integral reduced Cartier divisor on a polarised variety $(X,L)$. Denote $\mu_{\beta}((X,D);L)= \frac{-(K_X+(1-\beta)D).L^{n-1}}{L^n}$ Suppose that 
\begin{itemize}
\item[(i)] $\alpha((X,D);L)>\frac{n}{n+1}\mu_{\beta}((X,D);L)$ and
\item[(ii)] $-(K_X +(1-\beta)D)\geq \frac{n}{n+1}\mu_{\beta}((X,D);L) L$.
\end{itemize}
Then $((X,D);L)$ is log K-stable with cone angle $\beta$ along $D$.\end{theorem}

\begin{remark} In the case $L=-K_X$, and $D\in|-K_X|$, this result is due to Odaka-Sun \cite[Theorem 5.6]{OS2}. Again in the case $L=-K_X$, this is the analytic counterpart of a result of Berman (\cite{RB}, Theorem 3.11) and Jeffres-Mazzeo-Rubinstein (\cite{JMR}, Lemma 6.9). Explicit examples are given in \cite{CR}.\end{remark}

To prove this theorem, we extend Proposition \ref{alphaprop} to the log setting.

\begin{proposition}\label{logalphaprop} Let $\scB$ be the blow-up of $X\times\pr^1$ along a flag ideal, with $\scB$ normal and Gorenstein in codimension one, $\scL-E$ relatively semi-ample over $\pr^1$ and notation as in Remark \ref{flagconventions}. Denote the natural map arising from the composition of the blow-up map and the projection map by $\Pi: \scB\to\pr^1$. Denote also

\begin{itemize}
\item the discrepancies as: $K_{\mathcal{B} / X \times \pr^1} = \sum a_iE_i$,
\item the multiplicities of $X\times \{0\}$ as: $\Pi^*(X\times \{0\}) = \Pi_*^{-1}(X\times \{0\}) + \sum b_i E_i$,
\item the multiplicities of $D$ as: $\Pi^*D = \Pi_*^{-1}D + \sum d_i E_i$,
\item the exceptional divisor as: $\Pi^{-1}\mathcal{I} = \mathcal{O}_{\mathcal{B}}(-\sum c_i E_i)= \mathcal{O}_{\mathcal{B}}(-E).$
\end{itemize} 

Then \begin{equation} \alpha((X,(1-\beta)D);L) \leq \min_i\left\{\frac{a_i-b_i+1-(1-\beta)d_i}{c_i}\right\}. \end{equation}\end{proposition}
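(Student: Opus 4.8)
The plan is to mirror the proof of Proposition \ref{alphaprop}, but now carrying the extra boundary divisor $(1-\beta)D$ through every step. As in the non-log case, the quantity we wish to bound is defined via $\Q$-divisors on $X$, so the first move is to pass from the full flag ideal $\scI = I_0 + (t)I_1 + \hdots + (t^N)$ to its first component $I_0$. Since $\scL - E$ is relatively semi-ample over $\pr^1$, the line bundle $\pi_0^*L - E_0$ on $Bl_{I_0}X$ is semi-ample exactly as before, because $I_0 \subseteq \scI$ and basepoint-freeness on each fibre descends to $I_0$. Choosing $m$ large and divisible, I pick a divisor $F$ in the linear series $H^0(X, I_0^m(mL))$ and aim to show
\begin{equation}
\alpha((X,(1-\beta)D);L) \leq m\,\lct((X,(1-\beta)D);F) \leq \min_i\left\{\frac{a_i-b_i+1-(1-\beta)d_i}{c_i}\right\}.
\end{equation}

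The next step is the chain of inequalities comparing log canonical thresholds. The monotonicity property $I \subset J \Rightarrow \lct \leq \lct$ (\cite[Property 1.12]{MM}) applied to the pair $(X,(1-\beta)D)$ gives $\lct((X,(1-\beta)D);I_F) \leq \frac{1}{m}\lct((X,(1-\beta)D);I_0)$. I would then invoke inversion of adjunction to compare the threshold on $X$ with one on $X\times\pr^1$: the key identity is
\begin{equation}
\lct((X,(1-\beta)D);I_0) = \lct((X\times\pr^1,\, X\times\{0\}+(1-\beta)(D\times\pr^1));\, I_0),
\end{equation}
seen by pulling back along a log resolution of the product shape $\tilde{X}\times\pr^1 \to X\times\pr^1$. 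Because $\val_{E_i}(I_0)\geq \val_{E_i}(\scI)$ for every divisor $E_i$ over $X$, replacing $I_0$ by the full $\scI$ only decreases the threshold, and then Remark \ref{discrepineq} converts the resulting log canonical threshold on $\scB$ into the minimum over $i$ of $(1 + \val_{E_i}K_{\scB/X\times\pr^1} - \val_{E_i}(X\times\{0\}) - (1-\beta)\val_{E_i}(D\times\pr^1))/(c_i)$, which is exactly $(a_i - b_i + 1 - (1-\beta)d_i)/c_i$ in the stated notation.

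The step requiring the most care is the boundary contribution in inversion of adjunction. In Proposition \ref{alphaprop} the boundary was only the divisor $X\times\{0\}$; here I must also track the pulled-back discrepancies of $(1-\beta)D$, and I need the coefficient $d_i = \val_{E_i}(\Pi^*D)$ to enter the discrepancy formula correctly as a subtracted term. The hypothesis that $D$ is an integral reduced Cartier divisor and that $(X,D)$ is log canonical is what guarantees the pair $(X,(1-\beta)D)$ has a well-defined log canonical threshold and that the resolution can be taken compatibly with $D\times\pr^1$; the product structure of the resolution ensures the valuations $\val_{E_i}(D\times\pr^1)$ agree with the $d_i$ computed from $\Pi^*D$. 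Once this bookkeeping is correct, the remaining inequalities are formally identical to the non-log case, and Remark \ref{discrepineq} delivers the final bound. I therefore expect the only genuine obstacle to be verifying that the $d_i$-term propagates consistently through the adjunction and discrepancy computations, while everything else carries over verbatim from Proposition \ref{alphaprop}.
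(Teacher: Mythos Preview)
Your proposal is correct and follows essentially the same route as the paper's proof: pass to $I_0$, produce a divisor $F\in|mL|$ via semi-ampleness of $\pi_0^*L-E_0$, use monotonicity of log canonical thresholds under ideal inclusion, lift to $X\times\pr^1$ via an adjunction-type identity with boundary $X\times\{0\}+(1-\beta)(D\times\pr^1)$, and finish with Remark \ref{discrepineq} on $\scB$. One small slip: when you write that ``replacing $I_0$ by the full $\scI$ only decreases the threshold,'' the inequality actually goes the other way---since $I_0\subset\scI$ (equivalently $\val_{E_i}(I_0)\geq\val_{E_i}(\scI)$), one has $\lct(\cdot;I_0)\leq\lct(\cdot;\scI)$, which is precisely the direction needed for the chain of upper bounds to reach $\min_i\{(a_i-b_i+1-(1-\beta)d_i)/c_i\}$; your surrounding logic already uses the correct direction, so this is only a wording error.
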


\begin{proof} Let $\pi_0: Bl_{I_0}X \to X$ be the blow-up of $I_0$ with exceptional divisor $E_0$. As in Proposition \ref{alphaprop}, we have that $\pi_0^*L-E_0$ is semi-ample. Choose $m$ sufficiently large and divisible such that $H^0(Bl_{I_0}X, m(\pi_0^*L - E_0)) = H^0(X,I_0^{m}(mL))$ has a section, and let $F$ be such a section. We show that \begin{equation}\alpha((X,(1-\beta)D);L) \leq m\lct((X,D);F) \leq \min_i\left\{ \frac{a_i-b_i+1-(1-\beta)d_i}{c_i}\right\}.\end{equation} 

Since for general ideal sheaves $I,J$, we have $I \subset J$ implies $\lct(X,I) \leq \lct(X,J)$, we see that \begin{equation} \lct((X,D);F) = \lct((X,D);I_F)  \leq  \frac{1}{m}\lct((X,D);I_0).\end{equation}

Since $(X,D)$ is log canonical, using inversion of adjunction of log canonicity (Theorem \ref{inversionofadjunction}), we have 
\begin{align} \lct((X,(1-\beta)D);I_0)  & =  \lct((X\times\pr^1, X\times\{0\}+(1-\beta)D\times\pr^1);I_0) \\ & \leq  \lct((X\times\pr^1, X\times\{0\} + (1-\beta)D\times\pr^1 );\scI)  \\ & \leq \min_i \left\{\frac{a_i-b_i+1-(1-\beta)d_i}{c_i}\right\}. \end{align}

The last inequality is by Remark \ref{discrepineq}.

\end{proof}

Using this we can prove Theorem \ref{logmaintheorem}.

\begin{proof} (of Theorem \ref{logmaintheorem}) We treat the case $r=1$ for notational simplicity, the general case is similar. The log Donaldson-Futaki invariant is given by 
\begin{align} \DF_{\beta}&((\scB,\scB_{(D\times\pr^1)});\scL-E) = -n(L^{n-1}.(K_X+(1-\beta)D))(\scL-E)^{n+1} + 
\\ &+ (n+1)(L^n)(\scL-E)^n.(\scK_X + (1-\beta)\scD + (K_{\scB / ((X,(1-\beta)D)\times\pr^1})_{exc}).\end{align} For ease of notation, we let $K_X' = K_X+(1-\beta)D$ and $\scK' = \scK_X + (1-\beta)\scD$. We split the log Donaldson-Futaki invariant into two terms as 
\begin{align} &\DF_{\beta}((\scB,\scB_{(D\times\pr^1)});\scL-E) = \DF_{\beta,num}+\DF_{\beta,disc}, \\
&\DF_{\beta,num} = (\scL-E)^n.(-n(L^{n-1}.K_X')\scL + (n+1)(L^n)\scK_X'), \\ 
&\DF_{\beta,disc} = (\scL-E)^n.((n+1)(L^n)(K_{\scB / ((X,(1-\beta)D)\times\pr^1})_{exc}) + n(L^{n-1}.K_X') E). \end{align}
Since $-(K_X +(1-\beta)D)\geq \frac{n}{n+1}\mu_{\beta}((X,D);L) L$, Lemma \ref{nefdf} implies that $\DF_{\beta,num}\geq 0$.

To prove $\DF_{\beta,disc} > 0$, since $(\scL-E)^n.E>0$ by Lemma \ref{excdf} it suffices to prove the existence of an $\epsilon>0$ such that \begin{equation} \scK_{\scB / ((X,(1-\beta)D)\times\pr^1})_{exc} - \frac{n}{n+1}\mu_{\beta}((X,D);L) E \geq \epsilon E.\end{equation} By Proposition \ref{logalphaprop} and the first hypothesis of the theorem, we have that 

\begin{align}
& (K_{\scB / ((X,(1-\beta)D)\times\pr^1})_{exc} - \frac{n}{n+1}\mu_{\beta}((X,D);L)E > \\
& (K_{\scB / ((X,(1-\beta)D)\times\pr^1})_{exc} - \alpha((X,D);L)E \\
& \geq \sum (a_i-(1-\beta)d_i)E_i - \min_i\left\{\frac{a_i-b_i+1-(1-\beta)d_i}{c_i}\right\}\sum c_iE_i \\
& = \sum \left(\frac{a_i-b_i-(1-\beta)d_i+1}{c_i} - \min_i\left\{\frac{a_i-b_i-(1-\beta)d_i}{c_i}+1\right\} + \frac{b_i-1}{c_i}\right)c_iE_i \\
& \geq  0.
\end{align} The result follows.\end{proof}

\noindent {\bf Acknowledgements:}  I would like to thank my supervisor Julius Ross for his support, advice and for many useful discussions. I would also like to thank Jesus Martinez-Garcia, Ivan Cheltsov, Costya Shramov and John Ottem for helpful conversations. Finally I would like to thank the referee for useful comments, in particular for pointing out Corollary \ref{automorphisms}.

\vspace{4mm}

\noindent Ruadha\'i Dervan, University of Cambridge,  UK \\ R.Dervan@dpmms.cam.ac.uk

\begin{thebibliography}{10}

\bibitem{AP}
C.~Arezzo and F.~Pacard.
\newblock Blowing up and desingularizing constant scalar curvature {K}\"ahler
  manifolds.
\newblock {\em Acta Math.}, 196(2):179--228, 2006.

\bibitem{RB}
R.~J. {Berman}.
\newblock {A thermodynamical formalism for Monge-Ampere equations,
  Moser-Trudinger inequalities and K\"ahler-Einstein metrics}.
\newblock {\em arXiv:1011.3976}, Nov. 2010.

\bibitem{IC}
I.~Cheltsov.
\newblock Log canonical thresholds of del {P}ezzo surfaces.
\newblock {\em Geom. Funct. Anal.}, 18(4):1118--1144, 2008.

\bibitem{CDS}
I. ~Cheltsov., C. ~Shramov. With an appendix by J.-P. Demailly. 
\newblock Log-canonical thresholds for nonsingular {F}ano threefolds.
\newblock {\em Uspekhi Mat. Nauk}, 63(5):73--180, 2009.

\bibitem{CR}
I. ~Cheltsov., Y.~A. Rubinstein.
\newblock { Asymptotically log Fano varieties}.
\newblock {\em arXiv:1308.2503}, Aug. 2013.

\bibitem{CDS3}
X.-X. {Chen}, S.~{Donaldson}, and S.~{Sun}.
\newblock {K\"ahler-Einstein metrics on Fano manifolds, II: limits with cone
  angle less than 2$\pi$}.
\newblock {\em arXiv:1212.4714}, Dec. 2012.

\bibitem{CDS4}
X.-X. {Chen}, S.~{Donaldson}, and S.~{Sun}.
\newblock {K\"ahler-Einstein metrics on Fano manifolds, III: limits as cone angle
  approaches $2\pi$ and completion of the main proof}.
\newblock {\em arXiv:1302.0282}, Feb. 2013.

\bibitem{CDS2}
X.-X. {Chen}, S.~{Donaldson}, and S.~{Sun}.
\newblock {K\"ahler-Einstein metrics on Fano manifolds, I: approximation of
  metrics with cone singularities}.
\newblock {\em arXiv:1211.4566}, Nov. 2012.

\bibitem{SD}
S.~K. Donaldson.
\newblock Lower bounds on the {C}alabi functional.
\newblock {\em J. Differential Geom.}, 70(3):453--472, 2005.


\bibitem{JMR}
T.~{Jeffres}, R.~{Mazzeo}, Y.~A. Rubinstein.
\newblock { K\"ahler-Einstein metrics with edge singularities}.
\newblock {\em  arXiv:1105.5216}, May 2011.

\bibitem{MK}
M.~Kawakita.
\newblock Inversion of adjunction on log canonicity.
\newblock {\em Invent. Math.}, 167(1):129--133, 2007.

\bibitem{JK}
J.~Koll{\'a}r.
\newblock Singularities of pairs.
\newblock In {\em Algebraic geometry---{S}anta {C}ruz 1995}, volume~62 of {\em
  Proc. Sympos. Pure Math.}, pages 221--287. Amer. Math. Soc., Providence, RI,
  1997.

\bibitem{KM}
J.~Koll{\'a}r and S.~Mori.
\newblock {\em Birational geometry of algebraic varieties}, volume 134 of {\em
  Cambridge Tracts in Mathematics}.
\newblock Cambridge University Press, Cambridge, 1998.


\bibitem{LS}
C.~LeBrun and S.~R. Simanca.
\newblock Extremal {K}\"ahler metrics and complex deformation theory.
\newblock {\em Geom. Funct. Anal.}, 4(3):298--336, 1994.

\bibitem{LX}
C.~{Li} and C.~{Xu}.
\newblock {Special test configurations and K-stability of Fano varieties}.
\newblock {\em arXiv:1111.5398}, Nov. 2011.

\bibitem{TL}
T.~Luo.
\newblock A note on the {H}odge index theorem.
\newblock {\em Manuscripta Math.}, 67(1):17--20, 1990.

\bibitem{YM}
Y.~I. Manin.
\newblock {\em Cubic forms}, volume~4 of {\em North-Holland Mathematical
  Library}.
\newblock North-Holland Publishing Co., Amsterdam, second edition, 1986.
\newblock Algebra, geometry, arithmetic, Translated from the Russian by M.
  Hazewinkel.

\bibitem{JMG}
J.~{Martinez-Garcia}.
\newblock {Log canonical thresholds of Del Pezzo Surfaces in characteristic $p$}.
\newblock {\em arXiv:1203.0995}, Mar. 2012.

\bibitem{MM}
M.~Musta{\c{t}}{\u{a}}.
\newblock I{MPANGA} lecture notes on log canonical thresholds.
\newblock In {\em Contributions to algebraic geometry}, EMS Ser. Congr. Rep.,
  pages 407--442. Eur. Math. Soc., Z\"urich, 2012.
\newblock Notes by Tomasz Szemberg.

\bibitem{O1}
Y.~Odaka.
\newblock A generalization of the Ross-Thomas slope theory.
\newblock {\em  Osaka J. Math.}, 50:171--185, 2013.

\bibitem{O2}
Y.~Odaka.
\newblock The {C}alabi conjecture and {K}-stability.
\newblock {\em Int. Math. Res. Not. IMRN}, (10):2272--2288, 2012.

\bibitem{OS}
Y.~Odaka and Y.~Sano.
\newblock Alpha invariant and {K}-stability of {$\Bbb Q$}-{F}ano varieties.
\newblock {\em Adv. Math.}, 229(5):2818--2834, 2012.

\bibitem{OS2}
Y.~{Odaka} and S.~{Sun}.
\newblock {Testing log K-stability by blowing up formalism}.
\newblock {\em arXiv:1112.1353}, Dec. 2011.

\bibitem{PR}
D.~Panov and J.~Ross.
\newblock Slope stability and exceptional divisors of high genus.
\newblock {\em Math. Ann.}, 343(1):79--101, 2009.

\bibitem{JP}
J.~Park.
\newblock Birational maps of del {P}ezzo fibrations.
\newblock {\em J. Reine Angew. Math.}, 538:213--221, 2001.

\bibitem{RT2}
J.~Ross and R.~Thomas.
\newblock An obstruction to the existence of constant scalar curvature
  {K}\"ahler metrics.
\newblock {\em J. Differential Geom.}, 72(3):429--466, 2006.

\bibitem{RT}
J.~Ross and R.~Thomas.
\newblock A study of the {H}ilbert-{M}umford criterion for the stability of
  projective varieties.
\newblock {\em J. Algebraic Geom.}, 16(2):201--255, 2007.

\bibitem{JS1}
J.~Stoppa.
\newblock K-stability of constant scalar curvature {K}\"ahler manifolds.
\newblock {\em Adv. Math.}, 221(4):1397--1408, 2009.

\bibitem{JS3}
J.~{Stoppa}.
\newblock {A note on the definition of K-stability}.
\newblock {\em arXiv:1111.5826}, Nov. 2011.

\bibitem{GS}
G.~Sz{\'e}kelyhidi.
\newblock {Filtrations and test configurations}.
\newblock {\em arXiv:1111.4986}, Nov. 2011.

\bibitem{GT}
G.~Tian.
\newblock On {K}\"ahler-{E}instein metrics on certain {K}\"ahler manifolds with
  {$C_1(M)>0$}.
\newblock {\em Invent. Math.}, 89(2):225--246, 1987.

\bibitem{GT2}
G.~{Tian}.
\newblock {K-stability and K\"ahler-Einstein metrics}.
\newblock {\em  arXiv:1211.4669}, Nov. 2012.

\bibitem{AW}
A.~{Wilson}.
\newblock {Smooth exceptional del Pezzo surfaces}.
\newblock {\em PhD Thesis, University of Edinburgh}, 2010.

\end{thebibliography}
\end{document}